\numberwithin{equation}{section}
\newtheorem{theorem}{Theorem}[section]
\newtheorem{lemma}[theorem]{Lemma}
\newtheorem{cor}[theorem]{Corollary}
\newtheorem{remark}{Remark}
\DeclareRobustCommand\widecheck[1]{{\mathpalette\@widecheck{#1}}}
\def\@widecheck#1#2{%
    \setbox\z@\hbox{\m@th$#1#2$}%
    \setbox\tw@\hbox{\m@th$#1%
       \widehat{%
          \vrule\@width\z@\@height\ht\z@
          \vrule\@height\z@\@width\wd\z@}$}%
    \dp\tw@-\ht\z@
    \@tempdima\ht\z@ \advance\@tempdima2\ht\tw@ \divide\@tempdima\thr@@
    \setbox\tw@\hbox{%
       \raise\@tempdima\hbox{\scalebox{1}[-1]{\lower\@tempdima\box
\tw@}}}%
    {\ooalign{\box\tw@ \cr \box\z@}}}
\newcommand{\om}{\omega}
\DeclareMathOperator{\sech}{sech}
\newcommand{\ds}{\displaystyle}
\newcommand{\be}{\begin{equation}}
\newcommand{\ee}{\end{equation}}
\newcommand{\bes}{\begin{equation*}}
\newcommand{\ees}{\end{equation*}}
\newcommand{\R}{{\bf{R}}}
\newcommand{\C}{{\bf{C}}}
\renewcommand{\L}{{\mathcal{L}}}
\renewcommand{\O}{{\mathcal{O}}}
\newcommand{\Fo}{{\mathfrak{F}}}
\renewcommand{\hat}{\widehat}
\newcommand{\bunderbrace}[2]{%
  \begin{array}[t]{@{}c@{}}
  \underbrace{#1}\\
  #2
  \end{array}
}
\title{A simple model of radiating solitary waves}
\author{J. Douglas Wright}
\begin{document}
\maketitle
\begin{abstract}
To understand an oft-observed but poorly understood phenomenon in which a solitary wave in a dispersive equation slowly deteriorates due a persistent emission of radiation ({\it i.e.} a ``radiating solitary wave''), we propose a bare-bones model which captures many essential features and which we are capable of analyzing completely by way of the Laplace transform.
We find that  wave amplitude decreases at an exponential rate but with a decay constant that is (in many cases) small beyond
all orders of the frequency.
\end{abstract}


\section{Introduction}

In the articles \cite{okada, tabata, GSWW}, simulations of solitary waves in spatially heterogeneous variants of the Fermi-Pasta-Ulam-Tsingou (FPUT) and Toda lattices demonstrated that such waves do not propagate without change of form but instead continuously emit a small trailing ripple. The systems conserve energy and consequently the solitary waves experience a commensurate and extremely slow attenuation in amplitude. See Figure~\ref{co-ops} for a representative depiction.\begin{figure}
\centering
    \includegraphics[width=.95\textwidth]{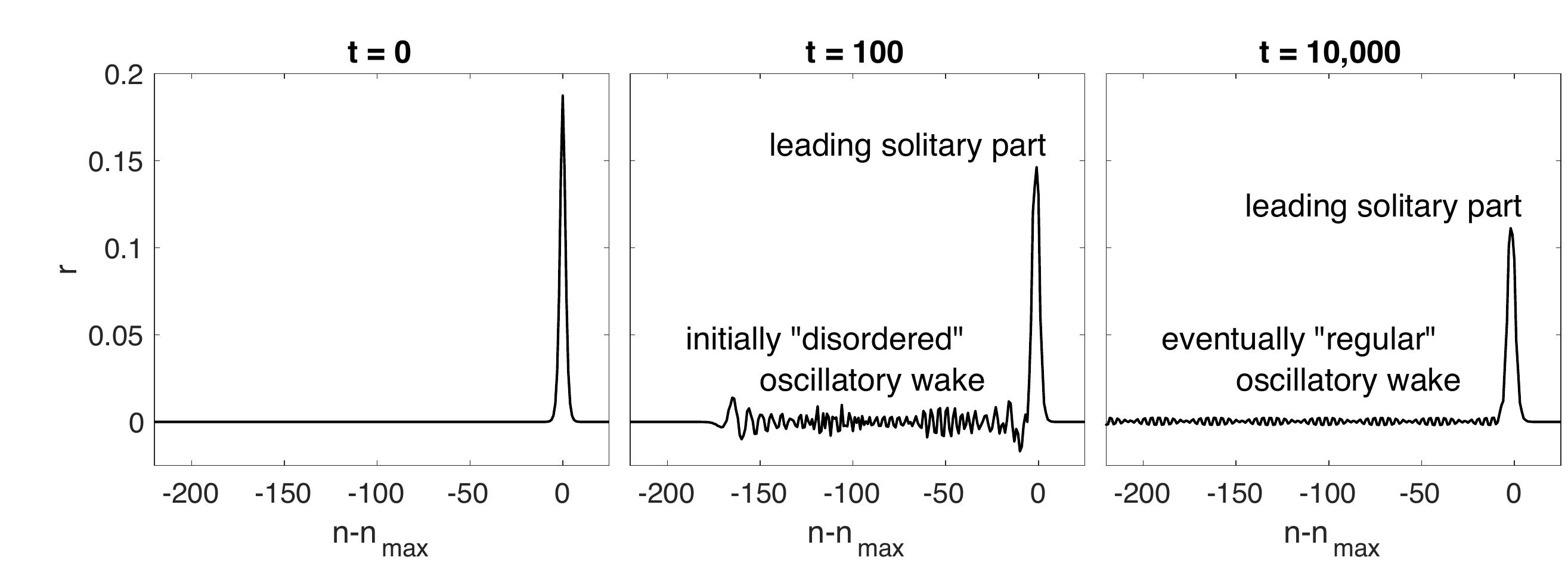}    \caption{An example of a ``radiating solitary wave'' in a diatomic FPUT lattice. The wave is propagating to the right and the horizontal axis is in a moving reference frame.
    Figure taken from \cite{GSWW}.}
    \label{co-ops}
\end{figure}
A similar phenomena occurs in simulations of mass-in-mass lattices \cite{HHMCKKYV, faver-goodman-wright}, as well as in a variety of models for the evolution of capillary-gravity waves \cite{benilov, bona-etal, tan-etal}, though in this latter case the emission runs ahead of the solitary wave. Simulations of water waves over variable bottom topography evoke a similar dynamic \cite{nachbin-etal, nachbin-papanicolaou, liu-yue, nakoulima-etal}. These sorts of waves, often referred to as {\it radiating solitary waves}, are  examples of metastable structures in nonlinear dispersive systems.

The observed 
 attenuation is so slow that one of the major tools for analyzing the dynamics of solitary wave-like solutions, namely approximations of the problem with the Korteweg-de Vries  (KdV) (or similar) equations \cite{GMWZ, schneider-wayne, hadadifard-wright, chirilus-bruckner-etal, pelinovsky-schneider},
is incapable of capturing the phenomena. 
Such approximations are valid over very long but nevertheless finite time intervals; the erosion is so subtle during the period of good approximation that it falls within the natural error bounds. 
Moreover, radiating solitary waves
very often occur in problems where the construction of genuinely localized solitary waves  fails and what is found instead are {\it generalized solitary waves} (also known as a {\it nanopterons}) \cite{beale, akylas-yang, sun, faver-wright, faver-spring, hoffman-wright, faver-mim, lombardi, johnson-wright, joshi-lustri, lustri-porter}.  
These traveling wave solutions are asymptotic at spatial infinity to very small amplitude co-propagating periodic waves and are consequently of infinite energy, further evidence that the dynamics of finite energy solitary wave-like solutions is subtle. 

Unlike their steady counterparts the nanopterons, 
there is at this time no fully rigorous mathematical explanation or description of these radiating solitary waves in any of the many problems in which they arise.
There are some careful non-rigorous treatments (especially \cite{benilov}), but in the main investigations are heuristic or numerical.
The phenomenon is usually attributed to an excitation of some sort of high-frequency oscillation 
by the solitary wave, due, for instance, to band structure considerations \cite{GSWW}, Bragg scattering \cite{liu-yue}, internal resonances \cite{faver-goodman-wright} or non-monotone dispersion relations. 
Notably, the amplitude decays so slowly that in most cases it is very hard to determine the rate with any level of precision and only a few articles  hazard a prediction, not all of which agree. For instance, the formal analysis in \cite{benilov} predicts the decay is exponential with a very small decay rate whereas  \cite{GSWW} (co-authored by the author of this paper) conjectures an algebraic rate of decay based on numerical evidence.

To better understand radiating solitary waves, in this article 
we formulate a rather bare-bones model which captures many essential features at play and which we can rigorously analyze. For our system we are able to exactly pin down the rate at which the solitary wave decays as well as a rather complete description of the radiating tail.
Note that our model is decidedly not meant to quantitatively portray any of the specific systems mentioned but instead provide a partial skeleton for the rigorous analysis of such systems down the line.

Here are the main ingredients of our  model:
\begin{itemize}
\item A spatial variable $x \in \R$ and time variable $t \in \R$.
\item A solitary wave of fixed speed and profile but variable amplitude: $a(t) q(x-t)$.
Here $q(x)$ is the profile and $a(t)\in \R$ is the amplitude. We assume that $q(x)$ is real-valued
and satisfies some decay/regularity conditions we make precise below.
\item  A high-frequency simple harmonic oscillator located at each point $x \in \R$.
We represent each oscillator  by its complex amplitude $\psi(x,t) \in \C$. This field of oscillators is driven by 
the solitary wave in a naive way: 
\be\label{SHO}
 \psi_t = i \om \psi + \om a(t) q(x-t).
\ee
In the above, the oscillators' natural frequency is $\omega \gg 1.$
\item A conserved energy
\be\label{energy}
E : ={1 \over 2} a(t)^2 + {1 \over 2 \om^2} \int_\R \left \vert \psi(x,t) \right \vert^2 dx.
\ee
The first term is proportional to the square of the $L^2$-norm ({\it i.e.}~energy) of the solitary wave and the second is the energy of the oscillator field.
\end{itemize}

Together \eqref{SHO} and \eqref{energy} form a closed system for the variables $a(t)$ and $\psi(x,t)$.
Our main result states that $a(t) \approx a_0 e^{-\theta_\om t}$ where $\theta_\om$ is all but equal to\footnote{The normalization/notation for the Fourier  transform of a function $f: \R \to \C$ we use is: $\ds \Fo[f](k):=\hat{f}(k):={1 \over 2\pi} \int_\R e^{-ikx} f(x) dx$.}
 $2 \pi^2|\hat{q}(\om)|^2$. Weak assumptions on $q(x)$ tell us that $\hat{q}(\om) \to 0$ as $\om \to \infty$ and as such
 the rate of decay is very slow when $\om$ is big.
 In particular, if $q(x)$ analytic then one knows that $|\hat{q}(\om)| \le c_1 e^{-c_2 |\om|}$ for some positive constants $c_1$ and $c_2$.
As such the decay rate of $a(t)$ is incredibly small, beyond all algebraic orders of $\om$. For instance, we find that if $\om = 10$ and $q(x) = \sech(x)$ then the time it will take for the amplitude to deteriorate to $95\%$ of its original amplitude is on the order of $10^{11}$.

We also prove several results about the asymptotics of the oscillator field, the most salient of which is that $e^{\theta_\om t} \psi(x,t)$ converges to a scalar multiple of a specific profile function
$\sigma_\om(x-t)$. This convergence is pointwise in the moving reference frame $x-t$. 
We have an explicit formula for this asymptotic profile which ultimately leads to the description
$$
\sigma_\om(x-t) \approx i q(x-t)/\om + \text{trailing periodic wave of extremely small amplitude}.
$$
That is to say, when viewed in a frame moving along with the solitary wave the oscillator field looks like roughly like a copy of the solitary wave plus a tiny periodic tail, exactly the qualitative behavior seen in the systems we are hoping to understand.

In Section \ref{reformulations} we carry out a number of reformulations of \eqref{SHO} and \eqref{energy} which put the analysis and simulation
 of solutions within grasp. In Section \ref{fate of a} we use the Laplace transform to describe the dynamics of $a(t)$ for large times, the specifics of which are contained in Theorem~\ref{main}.
 In Section \ref{fate of psi} we similarly analyze the oscillatory field $\psi(x,t)$; see Theorem \ref{main2}. Lastly, in Section \ref{sims}, we describe the results of simulations of solutions of our model and compare them with our analytical results. 

{\bf Acknowledgements:} The author is grateful to the NSF who funded this work under grant DMS-2006172.

\section{Reformulations and reductions}\label{reformulations}

\subsection{An ODE on a Banach space}
Differentiation of  \eqref{energy} with respect to time and the use of \eqref{SHO} to eliminate $\psi_t$ get us:
$$
0 = a(t) \dot{a}(t) + {1 \over \om} \int_\R \Re \left[\psi^*(x,t)( i \psi(x,t) + a(t) q(x-t))\right] dx.
$$
The quantity $\psi^* i \psi$ is purely imaginary whereas $a$ and $q$ are purely real 
so we can simplify this to:
\be\label{a eqn}
\dot{a}(t) = - {1 \over \om} \int_\R \Re \left[\psi(x,t)\right] q(x-t) dx.
\ee

Note that now \eqref{SHO} and \eqref{a eqn} are an ODE for the variables $(\psi,a)$ on the Banach space $L^2(\R;\C) \times \R$. Under the rather weak assumption that $q(x) \in L^2(\R;\R)$ it is simple enough to show that the mapping defined by the right hand sides of these equations is smooth from $L^2(\R;\C) \times \R$ into itself and as such Picard's Theorem gives the existence of solutions for short times. The conservation law \eqref{energy} then allows us to extend the solution for all times. In short, we can prove:
\begin{theorem}\label{gwp}
Fix $q(x) \in L^2(\R;\R)$ and $\om > 0$. Then for any $\psi_0(x) \in L^2(\R;\C)$ and $a_0 \in \R$ there exists
unique
$
(\psi,a) \in C^1(\R;L^2(\R;\C) \times \R) 
$
which satisfy \eqref{SHO}, \eqref{a eqn}, $\psi(x,0) = \psi_0(x)$ and $a(0) = a_0$. 
The map carrying $(\psi_0,a_0)$ to $(\psi,a)$ is continuous (for $t$ in compact sets). Additionally for all $t$
$$
{1 \over 2} a(t)^2 + {1 \over 2 \om^2} \int_\R \left \vert \psi(x,t) \right \vert^2 dx =
{1 \over 2} a_0^2 + {1 \over 2 \om^2} \int_\R \left \vert \psi_0(x) \right \vert^2 dx.
$$
\end{theorem}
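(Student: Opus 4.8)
The plan is to observe that, once the system has been put in the form \eqref{SHO}--\eqref{a eqn}, it is an \emph{affine} — in fact linear — evolution equation on the Banach space $X := L^2(\R;\C) \times \R$ with bounded, time-dependent coefficients, so that everything follows from the elementary theory of linear ODEs in a Banach space together with the conservation law. Writing $U=(\psi,a)$, recast \eqref{SHO}--\eqref{a eqn} as $\dot U(t) = \A(t)U(t)$ with
\[
\A(t)(\psi,a) := \LC i\om\psi + \om a\, q(\cdot-t),\ -{1\over\om}\int_\R \Re[\psi(x)]\, q(x-t)\, dx \RC .
\]
The two facts to record about $\A(t)$ are: (i) \emph{uniform boundedness} — since $\|q(\cdot-t)\|_{L^2}=\|q\|_{L^2}$ for every $t$, Cauchy--Schwarz gives $\|\A(t)\|_{\B(X)} \le C(\om,\|q\|_{L^2})$ independently of $t$; and (ii) \emph{continuity in $t$} — because translation is strongly continuous on $L^2(\R)$, the curve $t\mapsto q(\cdot-t)$ is continuous into $L^2$, whence $t\mapsto\A(t)$ is continuous in the operator norm. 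Consequently $(t,U)\mapsto\A(t)U$ is jointly continuous on $\R\times X$ and, being linear in $U$, is $C^\infty$ in $U$ and globally Lipschitz in $U$ uniformly for $t$ in compact intervals.

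With (i) and (ii) in hand, Picard's theorem produces, for each $U_0=(\psi_0,a_0)\in X$, a unique maximal solution $U\in C^1((T_-,T_+);X)$ with $U(0)=U_0$; the $C^1$ regularity is automatic once one knows $\dot U(t)=\A(t)U(t)$ is continuous in $t$. To upgrade this to a global solution I would invoke the conservation law, since it is part of the assertion in any case: differentiating $t\mapsto \frac12 a(t)^2 + \frac{1}{2\om^2}\|\psi(\cdot,t)\|_{L^2}^2$ along the solution (legitimate since $U\in C^1(\,\cdot\,;X)$) and substituting \eqref{SHO} and \eqref{a eqn} simply runs in reverse the cancellation already carried out above to derive \eqref{a eqn}, so the quantity is constant and equals $E_0 := \frac12 a_0^2 + \frac{1}{2\om^2}\|\psi_0\|_{L^2}^2$. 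Hence $|a(t)|\le\sqrt{2E_0}$ and $\|\psi(\cdot,t)\|_{L^2}\le\om\sqrt{2E_0}$ on $(T_-,T_+)$, so $\|U(t)\|_X$ stays bounded and the standard continuation argument forces $T_\pm=\pm\infty$. (One could equally bypass the conservation law at this stage and use the Grönwall bound $\|U(t)\|_X\le\|U_0\|_X e^{C|t|}$ that follows directly from (i).)

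For continuous dependence: if $U,V$ are the solutions with data $U_0,V_0$, then $W:=U-V$ solves the same linear equation $\dot W=\A(t)W$ with $W(0)=U_0-V_0$, so Grönwall and (i) give $\|W(t)\|_X\le\|U_0-V_0\|_X\, e^{C|t|}$, which is exactly the claimed continuity, uniform for $t$ in compact sets.

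I do not expect a genuine obstacle here: the content of the proof is simply the recognition that the reduced system is linear with a uniformly bounded, norm-continuous generator, after which the textbook linear-ODE machinery applies verbatim. The only point requiring any care is property (ii) — the strong continuity of translation on $L^2$ and hence the $t$-regularity of $\A(t)$ — which is classical (approximate $q$ in $L^2$ by compactly supported continuous functions); and one must make the routine check that the energy identity, obtained formally by differentiating under the integral, is valid in the $L^2$ setting, which is immediate given the $C^1(\,\cdot\,;X)$ regularity of the solution.
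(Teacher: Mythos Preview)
Your proposal is correct and follows essentially the same route as the paper, which merely sketches that the right-hand side is smooth on $L^2(\R;\C)\times\R$, invokes Picard for local existence, and uses the energy conservation to continue globally. Your write-up is in fact more careful than the paper's one-paragraph outline: you explicitly exploit the linearity of the reduced system, verify the norm-continuity of $t\mapsto\A(t)$ via strong continuity of translation on $L^2$, and give the Gr\"onwall argument for continuous dependence.
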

Before moving on, note that formulating our system as in \eqref{SHO} and \eqref{a eqn} is a convenient starting point for performing simulations, which we do in Section \ref{sims}.


\subsection{A scalar delay differential equation}
We can eliminate the oscillatory field from the governing equations entirely.
We solve \eqref{SHO} by Duhamel's formula:
\be\label{duham1}
\psi(x,t) = e^{ i \om t} \psi_0(x) + \om \int_0^t e^{i \om (t-s)} a(s) q(x-s) ds.
\ee
Putting this into \eqref{a eqn}:
$$
\dot{a}(t) = \bunderbrace{- {1 \over \om}  \int_\R \Re\left[
e^{ i \om t} \psi_0 (x)\right]q(x - t) dx}{j_\om(t)}
-  \int_\R \Re\left[\int_0^t e^{i \om (t-s)} a(s) q(x-s) ds
\right]q(x - t) dx.
$$
Rearranging the order of integration in the second term plus some algebra yields
\be\label{the chosen one}
\dot{a}(t) =j_\om(t)
- \int_0^t \cos( \om (t-s)) q \star q(t-s)a(s)  ds
\ee
where 
$$\ds
q\star q (t) := \int_\R q(x + t) q(x) dx
$$
is the autocorrelation of $q$.
It is worth pointing out here that $q\star q(t)$ is an even function of $t$.

The scalar continuous delay differential equation \eqref{the chosen one} is equivalent to our original system and will be the formulation 
on which we do our analysis. Nevertheless there is one more change we make 
that permits 
a short formal analysis.

\subsection{A scalar renewal equation}
Integrating \eqref{the chosen one} 
 from $0$ to $t$ gives
$$
a(t) = a_0 + \bunderbrace{\int_0^t j_\om(\tau) d\tau}{f_\om(t)}- \int_0^t \int_0^\tau \cos(\om(\tau-s)) q \star q(\tau-s) a(s) ds d\tau.
$$
Exchanging the order of integration in the second term leads to
%
%
\be\label{renewal}
a(t) = a_0 + f_\om(t) + \int_0^t \phi_\om(t-s) a(s) ds
\ee
where
$$
\phi_\om(t) :=-\int_0^t \cos(\om s) q \star q(s)ds.
$$
Equation \eqref{renewal} is a renewal equation of the sort studied in \cite{bellman-cooke}, though in most applications of renewal equations the function $\phi_\om(t)$ is positive with finite first moment (neither of which is the case for us).

We do a quick non-rigorous analysis of \eqref{renewal} now, in the case where $f_\om(t) =0$.
Since $q \star q(t)$ is even, the Fourier convolution theorem implies $\phi_\om(t) \to -2 \pi^2|\hat{q}(\om)|^2$
as $t \to \infty$. If we simply replace $\phi_\om$ with this asymptotic state in \eqref{renewal} we have
$
a(t) = \ds a_0  -2 \pi^2|\hat{q}(\om)|^2\int_0^t a(s) ds.
$
Differentiation of this gives $\dot{a} = -2 \pi^2 \pi^2|\hat{q}(\om)|^2 a$ and so 
the solution of this simplified equation is $a_0 e^{-2 \pi^2|\hat{q}(\om)|^2t}$. This gives us our  first glimpse as to what the rate of decay for $a(t)$ is going to be. 
Of course this replacement of $\phi_\om$ is completely unjustified which is why we now move on to rigorous analysis.

\section{The fate of $a(t)$} \label{fate of a}
In this section we determine the long time asymptotics of $a(t)$ using \eqref{the chosen one} as the starting point.
The integral on the right hand side of that equation is a time-domain convolution of $a(t)$ and 
$$
k_\om(t):=\cos(\om t) q \star q(t).
$$
So if we apply the Laplace transform\footnote{Here, we use the notation that the Laplace
transform of a function $f: \R^+ \to \C$ is $\L[f](z) := F(z) := \int_0^\infty e^{-zt} f(t) dt.$} to \eqref{the chosen one} we get
\be\label{THE CHOSEN ONE}
z A(z) - a_0 = J_\om(z) -  K_\om(z) A(z).
\ee
We have made use of a number of well-known properties of the Laplace transform here. We adhere
to the usual convention that 
the Laplace transform of a variable whose name is in lower case is represented by the corresponding capital.


 We isolate $A(z)$ in \eqref{THE CHOSEN ONE}:
\be\label{here is A}
A(z) = {a_0 \over z+ K_\om(z)}  + {J_\om(z) \over z+K_\om(z)}.
\ee
From this we ascertain that the 
 key to understanding the evolution of $a(t)$ is the function $v_\om(t)$ whose Laplace transform 
is 
\be\label{this is V}
V_\om(z):={1 \over z+K_\om(z)}.
\ee
The following result tells us everything we need to know about $V_\om(z)$:
\begin{lemma} \label{key lemma} Suppose that, for some $\rho >0$, $e^{\rho t} q \star q (t)\in W^{1,\infty}(\R^+;\R)$.
Then, for all $\omega$ sufficiently large, $V_\om(z)$ has exactly one 
pole in the set $\Re(z) \ge - \rho/2$. This pole is simple and located at $z = -\theta_\om \in \R$ where
 \be\label{theta}
 \theta_\om =  2 \pi^2 |\hat{q}(\om)|^2(1+\O(1/\om)) .
 \ee
\end{lemma}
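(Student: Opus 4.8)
\emph{Proof plan.} The plan is to reduce everything to a count of the zeros of $F_\om(z) := z + K_\om(z)$ in the half-plane $\{\Re(z) \ge -\rho/2\}$. First I would note that $e^{\rho t} q\star q \in L^\infty(\R^+)$ forces $e^{\rho' t}q\star q \in L^1(\R^+)$ for every $\rho' < \rho$, so $K_\om = \L[\cos(\om\,\cdot)\,q\star q]$ is holomorphic on $\{\Re(z) > -\rho\}$; consequently $V_\om = 1/F_\om$ is meromorphic there and, on the strictly interior region $\{\Re(z)\ge -\rho/2\}$, its poles are precisely the zeros of $F_\om$, with the same multiplicities. So the lemma becomes a statement about those zeros.

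The crucial input will be that $K_\om$ and $K_\om'$ are uniformly small for large $\om$, which I would extract from the oscillation of the factor $\cos(\om t)$. Writing $\cos(\om t) = \frac{d}{dt}(\om^{-1}\sin\om t)$ and integrating by parts --- the boundary terms vanish at $t = 0$ (because of the $\sin(\om t)$, respectively the extra factor $t$ in the formula for $K_\om'$) and at $t = \infty$ (because the hypothesis forces $q\star q$ and $(q\star q)'$ to decay like $e^{-\rho t}$) --- one is led to
\begin{equation*}
K_\om(z) = -\frac{1}{\om}\int_0^\infty \sin(\om t)\,\frac{d}{dt}\!\left[e^{-zt}\,q\star q(t)\right]dt, \qquad K_\om'(z) = \frac{1}{\om}\int_0^\infty \sin(\om t)\,\frac{d}{dt}\!\left[t\,e^{-zt}\,q\star q(t)\right]dt.
\end{equation*}
Carrying out the $t$-derivatives and using $e^{\rho t}q\star q\in W^{1,\infty}(\R^+)$, every integrand is, for $\Re(z)\ge -\rho/2$, bounded by $e^{-\rho t/2}$ times a polynomial in $t$ times an $L^\infty$ function, which should yield a uniform estimate of the form $|K_\om(z)| + |K_\om'(z)| \le C(1+|z|)/\om$ on that half-plane, together with the crude bound $|K_\om(z)| \le \int_0^\infty e^{\rho t/2}|q\star q(t)|\,dt =: C_0$.

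With those bounds I would localize and count. If $|z| > C_0 + 1$ then $|F_\om(z)| \ge |z| - C_0 > 1$, so there are no zeros there; and on the bounded part $\{\Re(z)\ge -\rho/2,\ |z|\le C_0+1\}$ the first estimate gives $|K_\om(z)| \le C'/\om$, so any zero obeys $|z| = |K_\om(z)| \le C'/\om$. For $\om$ large this disk lies well inside $D := \{|z| < \rho/4\} \subset \{\Re(z)\ge -\rho/2\}$, and on $\partial D$ one has $|K_\om(z)| \le C'/\om < \rho/4 = |z|$, so Rouch\'e's theorem gives $F_\om$ exactly one zero in $D$ --- hence exactly one in the whole half-plane. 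To identify it, restrict $F_\om$ to $(-\rho,\infty)$: there it is real-valued, with $F_\om(0) = K_\om(0) = \int_0^\infty\cos(\om t)\,q\star q(t)\,dt = 2\pi^2|\hat q(\om)|^2 \ge 0$ (the last equality being the Fourier identity already used for $\phi_\om$, valid since $q\star q$ is even) and $F_\om(-\delta_\om) < 0$ for a suitable $\delta_\om = \O(1/\om)$ by the estimate above; the intermediate value theorem then produces a real zero $-\theta_\om$ with $0 \le \theta_\om \le \delta_\om$, which by uniqueness is the zero. It is simple because $F_\om'(-\theta_\om) = 1 + K_\om'(-\theta_\om) = 1 + \O(1/\om) \ne 0$, and from $\theta_\om = K_\om(-\theta_\om) = K_\om(0) + \big(K_\om(-\theta_\om) - K_\om(0)\big)$ together with $|K_\om(-\theta_\om) - K_\om(0)| \le \theta_\om\sup_{[-\theta_\om,0]}|K_\om'| \le C''\theta_\om/\om$ one obtains $\theta_\om(1+\O(1/\om)) = 2\pi^2|\hat q(\om)|^2$, i.e.\ \eqref{theta}.

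The step I expect to be the main obstacle is establishing the uniform-in-$\om$ estimates on $K_\om$ and $K_\om'$ over the entire closed half-plane $\{\Re(z)\ge-\rho/2\}$: one must control the behavior as $|\Im(z)|\to\infty$ carefully and keep honest track of the $(1+|z|)$ growth, since that is precisely what makes the localization and the Rouch\'e comparison with $z$ legitimate, and it is where the $W^{1,\infty}$-with-exponential-weight hypothesis has to be used through the integration by parts against the oscillatory kernel. Once those bounds are secured, the reduction to zeros of $F_\om$, the reality of the zero, and the algebra producing $\theta_\om$ are all routine.
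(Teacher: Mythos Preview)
Your proposal is correct and follows essentially the same architecture as the paper's proof: analyticity of $K_\om$ on $\{\Re(z)>-\rho\}$, a crude uniform bound to rule out zeros far from the origin, a refined $\O(1/\om)$ bound on the remaining bounded region, Rouch\'e's theorem to count exactly one zero, the intermediate value theorem on the real axis to locate it, and the $\O(1/\om)$ bound on $K_\om'$ to extract \eqref{theta}. The one substantive difference is how you obtain the $\O(1/\om)$ smallness: the paper uses the frequency-shift identity $K_\om(z)=\tfrac12\L[q\star q](z+i\om)+\tfrac12\L[q\star q](z-i\om)$ and then the integration-by-parts estimate $|\L[f](\zeta)|\le|\zeta|^{-1}(|f(0)|+\|\dot f\|_\rho/(\Re\zeta+\rho))$ at $\zeta=z\pm i\om$, which yields a bound with no growth in $|z|$; you instead integrate by parts directly against $\cos(\om t)=\tfrac{d}{dt}(\om^{-1}\sin\om t)$ and get $|K_\om(z)|\le C(1+|z|)/\om$. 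Both are the same integration by parts in disguise, and your $(1+|z|)$ factor is harmless since you only invoke the refined bound on the bounded set $\{|z|\le C_0+1\}$---so the ``main obstacle'' you flag is in fact already handled by your own crude-bound localization, and there is nothing further to worry about as $|\Im z|\to\infty$.
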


\begin{proof}
{\bf Preliminary estimates:}
For functions $f: \R^+ \to \C$ we put:
$$
\| f \|_\rho:= \sup_{t \ge 0} e^{\rho t} |f(t)|.
$$
For functions with $\| f \|_\rho < \infty$ we have the following elementary estimate, which holds for $\Re(z) > -\rho$:
\be\label{elementary}
|\L[f](z)|=\left\vert \int_0^\infty e^{-zt} f(t) dt \right\vert \le \int_0^\infty e^{-\Re(z) t} \|f\|_{\rho} e^{-\rho t} dt
\le {1 \over \Re(z) +\rho}\|f\|_{\rho}.
\ee
If we have $\| \dot{f}(t) \|_\rho < \infty$ then an integration by parts in the above gets:
\be\label{not so elementary}\begin{split}
|\L[f](z)|  
= &\left \vert 
{1 \over z} f(0) + {1 \over z} \int_0^\infty e^{-zt} \dot{f}(t) dt
\right \vert 
\le {1 \over |z|} \left( |f(0)|+ {1 \over \Re(z) +\rho}\|\dot{f}\|_{\rho}\right).
\end{split}\ee
This again holds for $\Re(z) > - \rho$. Lastly, the famous identity $\ds \L[t f](z) = - {d \over dz} \L[f] (z),$ combined with \eqref{not so elementary} implies
\be\label{nse deriv}
\left \vert {d \over dz} \L[f](z)\right \vert \le {1 \over |z|}  {1 \over \Re(z) +\rho}\left\|{d \over dt} \left(t f\right)\right\|_{\rho}.
\ee

\noindent
{\bf There are no poles far away from the origin:}
Note that the poles of $V_\om(z)$ are precisely the zeros of $z + K_\om(z)$ and consequently our analysis
will focus on $K_\om(z)$. 
The assumptions on $q \star q$ and the definition of $k_\om$ imply that $K_\om(z)$ is analytic in the set $\Re(z) > -\rho$.
Moreover we have 
$\| k_\om \|_\rho  \le \| q \star q\|_\rho<\infty$. Importantly, $\|k_\om\|_\rho$ can be controlled independently of $\om$.
Using \eqref{elementary} we have
$
|K_\om(z)| \le \| q \star q\|_\rho/(\Re(z) + \rho).
$
The restriction that $\Re(z) \ge -\rho/2$ tells us that
$|K_\om(z)| \le 2 \| q \star q\|_\rho /\rho$. Therefore
$
z+K(z) \ne 0$ when  $|z| > 2 \| q \star q\|_\rho/\rho$ and the only place to look for the zeros is in inside the
set $$
U_\rho:=\left \{ |z| \le 2 \| q \star q\|_\rho /\rho\right\} \cap \left\{\Re(z) \ge -\rho/2\right\}.$$

\noindent
{\bf There is just one pole near the origin:}
We use the fact that $\cos(\om t)$ is of high-frequency; the frequency shifting formula for the Laplace transform tells us
\be\label{K thing}
K_\om(z) = {1 \over 2} \L[q \star q](z+i \om) +  {1 \over 2} \L[q \star q](z-i \om).
\ee
Using \eqref{not so elementary} on the right hand side we have:
\be\label{K is small}
|K_\om(z)| \le \left({1 \over 2 |z+i\om| } +{1 \over 2 |z+i\om| }\right)\left( q \star q (0) + {1 \over \Re(z) + \rho} \left\| {d \over dt} q \star q\right\|_{\rho} \right).
\ee

We have assumed that $\left\| ({d/dt}) q \star q\right\|_{\rho} < \infty$ and this quantity is clearly independent of $\om$, as is $q\star q(0)$. In $U_\rho$ we have $\Re(z) + \rho \ge \rho/2$ and $|z \pm i \om| \ge \om - |z| \ge \om - 2 \| q \star q\|_\rho /\rho$. All these together tell us that there are constants $\om_0>0$ and $C_0>0$ for which 
\be\label{K est}
\om \ge \om_0 {\text{ and }} z \in U_\rho \implies |K_\om(z)| \le {C_0/\om}.
\ee
Now let $r:=\min\left\{ |z| : z \in \delta U_\rho\right\}>0$, which is independent of $\om$. Thus the preceding estimate allows us to find $\om_1>0$ so that 
$$
\om > \om_1 {\text{ and }} z \in U_\rho \implies |K_\om(z)| \le r/2.
$$

Thus we have $|K_\om(z)| < |z|$ on the boundary of $U_\rho$. Since $z$ and $K_\om(z)$ are analytic in $U_\rho$ we conclude, by Rouche's Theorem, that $z$ and $z+K_\om(z)$ have the same number of zeros (counted with multiplicity) inside $U_\rho$. Which is to say $z + K_\om(z)$ has one simple zero in that set.

And so, as foretold in Lemma \ref{key lemma}, we have shown that $V_\om(z) = 1/(z+K_\om(z))$ has exactly one simple pole (whose
location is denoted by $-\theta_\mu$) in the set $\Re(z) \ge -\rho/2$. What remains is to further pin down this pole as described in \eqref{theta}. 

\noindent{\bf Characterizing the pole:}
Since $q \star q(t)$ is even in $t$ we have
\be\label{K0}
K_\om(0) = \int_0^\infty \cos(\om t) q \star q (t) dt =
\pi \Fo[q \star q](\om)=2\pi^2 |\hat{q}(\om)|^2.
\ee
The final equality is due to the convolution theorem. 
The regularity/decay condition on $q\star q(t)$ implies 
$\Fo[q \star q](\om) \to 0$ as $\om \to \infty$, and thus so does $K_\om(0)$.

Differentiating \eqref{K thing} we have:
\bes
K'_\om(z) = {1 \over 2} {d \over dz} \L[q \star q](z+i \om) +  {1 \over 2} {d \over dz} \L[q \star q](z-i \om).
\ees
Then we use \eqref{nse deriv} to get
$$
|K'_\om(z)| \le {1 \over 2}  \left( {1 \over |z+i\om|} +{1 \over |z-i\om|} \right) {1 \over \Re(z) +.9 \rho}\left\|{d \over dt} \left(t q \star q\right)\right\|_{.9\rho}.
$$
Note that we have used the fact that $\|  f\|_\rho < \infty$ implies $\| tf \|_{\rho'} < \infty$ for $\rho'<\rho$.
Much as in the run up to \eqref{K est}, the above estimate implies the existence of $\om_2>0$ and $C_2>0$ such that
\be\label{K' est}
\om \ge \om_2 {\text{ and }} z \in U_\rho \implies |K'_\om(z)| \le {C_2/\om}.
\ee

The end approaches. Note that $K(z)$ is real-valued if $z=s \in \R$. The fundamental theorem of calculus implies that 
$
|K_\om(s) -K_\om(0)| = \left \vert \int_0^s K'(s')ds' \right \vert
$
and so \eqref{K' est} leads to:
$$
K_\om(0) + (1-C_2/\om)s \le s + K_\om(s) \le  K_\om(0) + (1+C_2/\om)s.
$$
The function on the right is zero at $s=-K_\om(0)/(1+C_2/\om)$ and the function on the left at $s=-K_\om(0)/(1-C_2/\om)$.
The intermediate value theorem then tells us that $s+K_\om(s)$ has a zero somewhere between these two points, which, because $K_\om(0) \to 0$ as $\om \to \infty$, are both in $U_\rho$.
Consequently that zero 
is $-\theta_\om$, the unique zero of $z+K_\om(z)$ in $U_\rho$ we found earlier.
All this, and \eqref{K0}, give:
$$
{2\pi^2 |\hat{q}(\om)|^2 \over 1 + C_2/\om} \le \theta_\om \le {2\pi^2 |\hat{q}(\om)|^2 \over 1 - C_2/\om}. 
$$
This is equivalent to \eqref{theta} and the proof is complete.

\end{proof}

Lemma \ref{key lemma} tell us that the rightmost pole of $V_\om(z)$ is located at $-\theta_\om$. The common wisdom that
pole placement determines decay rate  tells us to expect $v_\om(t)=\L^{-1}[V_\om](t)$ will behave like $e^{-\theta_\om t}$ as $t \to \infty$. Nevertheless inverting the Laplace transform is a sometimes subtle business and so we make a precise
statement and proof.
\begin{theorem}\label{main}
Suppose that, for some $\rho >0$, $e^{\rho |x|} q(x)\in W^{1,\infty}(\R;\R)$. 
Then the the solution of \eqref{SHO} and \eqref{a eqn}
with initial data $\psi(x,0) = 0$ and $a(0) = a_0 $ satisfies (for $\om$ sufficiently large)
\be\label{first}
a(t) = a_0 r_\om e^{-\theta_\om t} + b_\om(t)
\ee
where $\theta_\om = 2\pi^2 |\hat{q}(\om)|^2 (1+ \O(1/\om))$, $r_\om = 1 + \O(1/\om)$ and $\| b_\om \|_{\rho'} < \infty$
for any $\rho' < \rho/2$.

Additionally if $e^{\rho|x|} \psi_0(x) \in L^\infty(\R;\C)$ then the solution
of \eqref{SHO} and \eqref{a eqn} with initial data $\psi(x,0) = \psi_0(x)$ and $a(0) = a_0 $ satisfies (for $\om$ sufficiently large)
\be\label{second}
a(t) = \alpha e^{-\theta_\om t} + {b}_\om(t)
\ee
for some constant $\alpha = \alpha(\psi_0,a_0) \in \R$; $b_\om(t)$ satisfies the same estimate as above.

\end{theorem}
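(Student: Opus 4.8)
The plan is to invert the Laplace transform in \eqref{here is A}, written $A(z)=\bigl(a_0+J_\om(z)\bigr)V_\om(z)$, by pushing the Bromwich contour past the single pole of $V_\om$ furnished by Lemma~\ref{key lemma}. Theorem~\ref{gwp} already supplies a $C^1$ solution with $|a(t)|\le\bigl(a_0^2+\om^{-2}\|\psi_0\|_{L^2}^2\bigr)^{1/2}$, so $a$ is bounded, its Laplace transform converges for $\Re(z)>0$, and for every $c>0$ one has $a(t)=\frac{1}{2\pi i}\int_{c-i\infty}^{c+i\infty}e^{zt}A(z)\,dz$ (the integral converging conditionally, since $A(z)=\O(1/|z|)$ as $|z|\to\infty$ by \eqref{K thing}, \eqref{not so elementary} and the bounds on $J_\om$ below). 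Two preliminary facts: first, the residue of $V_\om$ at its simple pole $-\theta_\om$ is $r_\om=\bigl(1+K_\om'(-\theta_\om)\bigr)^{-1}$, which by \eqref{K' est} equals $1+\O(1/\om)$ and is real because $K_\om$ is real on $\R$; second, since $e^{\rho|x|}q\in W^{1,\infty}(\R)$ forces $e^{\tilde\rho t}\,q\star q\in W^{1,\infty}(\R^+)$ for every $\tilde\rho<\rho$, one may apply Lemma~\ref{key lemma} with any $\tilde\rho\in(2\rho',\rho)$, so that $-\theta_\om$ is the only pole of $V_\om$ in $\{\Re(z)\ge-\rho'\}$, and for $\om$ large $\theta_\om<\rho'$, placing this pole strictly between the lines $\Re(z)=-\rho'$ and $\Re(z)=c$.

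For \eqref{first} take $\psi_0=0$, so $j_\om\equiv0$ and $A(z)=a_0V_\om(z)$. Shift the contour from $\Re(z)=c$ to $\Re(z)=-\rho'$; the horizontal segments at height $\pm T$ vanish as $T\to\infty$ (there $|V_\om(z)|\le2/T$), so Cauchy's theorem picks up exactly the residue at $-\theta_\om$ and gives $a(t)=a_0r_\om e^{-\theta_\om t}+b_\om(t)$ with $b_\om(t):=\frac{a_0}{2\pi i}\int_{-\rho'-i\infty}^{-\rho'+i\infty}e^{zt}V_\om(z)\,dz$. To bound $b_\om$ I would split $V_\om(z)=\frac1z+G_\om(z)$, $G_\om(z):=-K_\om(z)/\bigl(z(z+K_\om(z))\bigr)$. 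The line integral of $e^{zt}/z$ over $\Re(z)=-\rho'$ is $0$ for $t>0$ (close the contour to the left; the pole $z=0$ lies to its right), and on that line $G_\om$ is continuous with $|G_\om(-\rho'+i\tau)|=\O(\tau^{-2})$ for large $|\tau|$ because $|K_\om|$ is bounded there while $|z+K_\om(z)|\ge|z|/2$; hence $\tau\mapsto G_\om(-\rho'+i\tau)$ lies in $L^1(\R)$ and $e^{\rho't}|b_\om(t)|\le\frac{|a_0|}{2\pi}\|G_\om(-\rho'+i\,\cdot\,)\|_{L^1}<\infty$, i.e.\ $\|b_\om\|_{\rho'}<\infty$.

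For \eqref{second} the only new ingredient is $J_\om$. From $e^{\rho|x|}q\in W^{1,\infty}$ and $e^{\rho|x|}\psi_0\in L^\infty$, the correlation $h(t):=\int_\R\psi_0(x)q(x-t)\,dx$ and its derivative decay like $(1+|t|)e^{-\rho|t|}$, so, writing $j_\om(t)=-\frac1{2\om}\bigl(e^{i\om t}h(t)+e^{-i\om t}\overline{h(t)}\bigr)$ and using \eqref{not so elementary}, $J_\om$ is analytic on $\Re(z)>-\tilde\rho$ with $|J_\om(z)|\le\frac{C}{\om}\bigl(|z-i\om|^{-1}+|z+i\om|^{-1}\bigr)$ — in particular bounded on $\Re(z)=-\rho'$ and $\O(1/|z|)$ at infinity. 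Running the same contour shift on $a(t)=\frac1{2\pi i}\int_c e^{zt}\bigl(a_0+J_\om(z)\bigr)V_\om(z)\,dz$ produces the residue coefficient $\alpha:=r_\om\bigl(a_0+J_\om(-\theta_\om)\bigr)\in\R$ and a remainder which, after the split $\bigl(a_0+J_\om(z)\bigr)V_\om(z)=\frac{a_0}z+\frac{J_\om(z)}z+\bigl(a_0+J_\om(z)\bigr)G_\om(z)$, is $\O(e^{-\rho't})$: the first term integrates to $0$ for $t>0$, and both $J_\om(z)/z$ and $(a_0+J_\om(z))G_\om(z)$ are in $L^1$ on $\Re(z)=-\rho'$ (each being $\O(\tau^{-2})$ as $|\tau|\to\infty$). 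This is \eqref{second}, with $b_\om$ satisfying the same bound. I expect the real work to be in handling the conditionally convergent Bromwich integral cleanly — isolating the Heaviside-type $a_0/z$ piece (which contributes nothing for $t>0$), verifying that the horizontal contributions vanish uniformly in the contour shift, and squeezing enough decay out of $G_\om$ along the shifted line to render the remainder absolutely integrable.
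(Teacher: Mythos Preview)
Your proof is correct and follows the same overall strategy as the paper: compute the residue $r_\om=(1+K_\om'(-\theta_\om))^{-1}=1+\O(1/\om)$, then invert the Laplace transform of $V_\om$ while isolating the contribution of the pole at $-\theta_\om$. The only real difference is in how the slow $\O(1/|z|)$ decay of $V_\om$ is handled. The paper subtracts an auxiliary simple pole $(1-r_\om)/(z+2\rho)$ so that the remainder $B_\om(z)$ is $\O(1/|z|^2)$ and then invokes a standard inversion lemma (K\"orner, Lemma~76.4). You instead shift the Bromwich contour directly, split $V_\om(z)=1/z+G_\om(z)$, kill the $1/z$ piece on the shifted line via Jordan's lemma, and bound the $\O(1/|z|^2)$ piece $G_\om$ in $L^1$. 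These are two standard devices for the same obstacle (both amount to cancelling the leading $1/z$ behavior of $V_\om$), and each works; your route is a bit more self-contained while the paper's is slightly cleaner in that it avoids direct manipulation of conditionally convergent contour integrals. Your treatment of the $J_\om$ term, and of the hypothesis reduction from $e^{\rho|x|}q\in W^{1,\infty}$ to $e^{\tilde\rho t}q\star q\in W^{1,\infty}$ for $\tilde\rho<\rho$, match the paper's.
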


\begin{proof}
From Lemma \ref{key lemma} we know that the pole of $V_\om(z)$ at $z = -\theta_\mu$ is simple. We now compute
the residue at the pole in the usual way:
$$
r_\om:=\textrm{Res}(V_\om,-\theta_\om) = \lim_{z \to -\theta_\om} (z+\theta_\om) V_\om = \lim_{z \to -\theta_\om} {z + \theta_\om \over z+ K_\om(z)} = {1 \over 1 +K'(-\theta_\om)}.
$$
Estimate \eqref{K' est} tells us that $r_\om = 1 + \O(1/\om)$.

With this we now know that
$$
W_\om(z):=V_\om(z) - {r_\om \over z+ \theta_\om}
$$
is analytic for $\Re(z) > -\rho/2$. 
Because of \eqref{K is small} we have
$$
\lim_{z \to \infty} z W_\om(z) = {z \over z + K_\om(z)} - {r _\om z \over z + \theta_\om} = 1 - r_\om
$$
provided $\Re(z) > -\rho/2$. Thus we have $|W_\om(z)| \le C/|z|$ for $|z|$ big enough. This is not a rapid of enough decay to for us
to deploy standard inversion results from (for instance) \cite{korner}.

To get around this we let
$$
B_\om(z) := V_\om(z) - {r_\om \over z+ \theta_\om}- {1 - r_\om \over z + 2\rho}.
$$
The final term there is a sort of ``fudge factor.''  Note that $B_\om(z)$ is analytic for $\Re(z)> -\rho/2$. It is easy to see that $\lim_{z \to \infty} z B_\om(z) =0$ provided $\Re(z) > -\rho/2$. 
Moreover a routine computation gives:
\be\begin{split}
&\lim_{z \to \infty} z^2 B_\om(z)
 = (-2r_\om + 2)\rho + r_\om \theta_\om .
 \end{split}\ee
 Thus  $|B_\om(z)| < C/|z|^2$ for $|z|$ big enough. This rate is fast enough to use Lemma 76.4 in \cite{korner} and conclude that
 $e^{\rho'  t}\L^{-1}[B_\om](t) \to 0$ as $t \to \infty$ for any $\rho' < \rho/2$.
 
 And so all together we find that
 \bes\begin{split}
 v_\om(t)
 = \L^{-1}\left[ {r_\om \over z+ \theta_\om}+ {1 - r_\om \over z + 2\rho}+B_\om\right](t)=
 r_\om e^{-\theta_\om t} + (1-r_\om) e^{- 2 \rho t} + \L^{-1}[B_\om](t).
 \end{split}
 \ees
 If $\psi_0(x,0)=0$ then from \eqref{here is A} we have $a(t) = a_0 v_\om(t)$ and the above leads directly to \eqref{first}.

 Now for \eqref{second}. If $\psi_0(x) \ne 0$ 
 the decay conditions placed upon it imply that $\| j_\om\|_{\rho} \le C/\om$; here is the calculation:
 $$
 |e^{\rho t} j_\om(t)| \le {1 \over \om}  e^{\rho t}\int_\R |\psi_0(x)| |q(x-t)| dx
 \le{C \over \om}  e^{\rho t}\int_\R e^{-\rho|x|} e^{-\rho|x-t|} dx \le {C \over \om}.
 $$
This tells us that $J_\om(z)$ is analytic for $\Re(z) > -\rho$. Thus the term $J_\om(z) V_\om(z)$ in \eqref{here is A}
is analytic in $\Re(z) > -\rho/2$ except for the pole of $V_\om(z)$ at $-\theta_\om$. The same sorts of steps as above can be repeated to show that 
 $\ds
 \lim_{t \to \infty} e^{\theta_\om t} \L^{-1}[J_\om  V_\om] (t)
 $
 exists, from which \eqref{second} follows. 
 \end{proof}
 
 \section{The fate of $\psi(x,t)$}\label{fate of psi}
 
Now that we have determined the dynamics of $a(t)$ for large values of $t$, we do the same for $\psi(x,t)$. We begin by
observing
that because $a(t) \to 0$ and we have the conservation of the energy \eqref{energy} we know that $\psi (x,t)$ does not converge to zero
in the $L^2(\R;\C)$ norm. This is not much of a statement, but it does indicate that the ultimate behavior of $\psi(x,t)$ is 
not disintegration.

For a more refined analysis, our starting point is
\eqref{duham1} which expresses $\psi(x,t)$ explicitly in terms of $a(t)$. 
Here is our first result:
\begin{cor}
 If $e^{\rho |x|} q(x)\in W^{1,\infty}(\R;\R)$ and $e^{\rho|x|} \psi_0(x) \in L^\infty(\R;\C)$ then the solution
of \eqref{SHO} and \eqref{a eqn} with initial data $\psi(x,0) = \psi_0(x)$ and $a(0) = a_0 $ satisfies (for $\om$ sufficiently large)
$$
\lim_{t \to \infty} e^{-i \om t} \psi(x,t) = \psi_0(x) + \om \int_0^\infty e^{-i \om s} a(s) q(x-s) ds
$$
pointwise in $x$.
\end{cor}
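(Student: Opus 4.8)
The plan is to read the result straight off Duhamel's formula \eqref{duham1}. Multiplying \eqref{duham1} by $e^{-i\om t}$ gives, for every $t>0$ and every $x$,
$$
e^{-i\om t}\psi(x,t) = \psi_0(x) + \om \int_0^t e^{-i\om s}\, a(s)\, q(x-s)\, ds,
$$
so the corollary amounts to exactly one claim: for each fixed $x$, the partial integrals on the right converge as $t\to\infty$ to $\om\int_0^\infty e^{-i\om s} a(s) q(x-s)\,ds$. Equivalently, $s\mapsto a(s)\,q(x-s)$ must be integrable on $[0,\infty)$ for each fixed $x$. Everything reduces to this one estimate.

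For that, I would combine two facts. First, $a$ is bounded on $[0,\infty)$: the hypothesis $e^{\rho|x|}\psi_0\in L^\infty$ forces $\psi_0\in L^2(\R;\C)$, since $\int_\R|\psi_0|^2 \le \|e^{\rho|\cdot|}\psi_0\|_\infty^2\int_\R e^{-2\rho|x|}\,dx<\infty$; hence Theorem~\ref{gwp} applies and the conserved energy yields $a(t)^2\le a_0^2+\om^{-2}\|\psi_0\|_{L^2}^2$ for all $t$. Second, $q$ decays exponentially: with $M:=\|e^{\rho|\cdot|}q\|_{L^\infty}$ one has $|q(x-s)|\le M e^{-\rho|x-s|}$. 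Together,
$$
\int_0^\infty \left| a(s)\, q(x-s)\right|\, ds \le \|a\|_{L^\infty(0,\infty)}\, M \int_0^\infty e^{-\rho|x-s|}\, ds < \infty
$$
for each fixed $x$, so $s\mapsto e^{-i\om s}a(s)q(x-s)\in L^1(0,\infty)$. One could instead invoke the sharper decay $a(t)=\alpha e^{-\theta_\om t}+b_\om(t)$ from \eqref{second}, but mere boundedness suffices; in fact nothing here requires $\om$ to be large, so the ``$\om$ sufficiently large'' hypothesis is only inherited from the surrounding statements and is not essential for this corollary alone.

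With absolute integrability in hand the conclusion is immediate: the tail $\om\int_t^\infty e^{-i\om s}a(s)q(x-s)\,ds$ has modulus at most $\om\int_t^\infty |a(s)|\,|q(x-s)|\,ds$, which tends to $0$ as $t\to\infty$ because the tail of an absolutely convergent integral vanishes; hence $\om\int_0^t\to\om\int_0^\infty$ and the displayed limit holds pointwise in $x$. I do not anticipate any real obstacle. The one point worth a sentence in the write-up is purely interpretive: $\psi(\cdot,t)$ is a priori only an element of $L^2(\R;\C)$, so ``pointwise in $x$'' should be read for the distinguished representative furnished by the right side of \eqref{duham1} (for which the limit is the assertion). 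If one wanted more, the same bound gives convergence uniformly on compact $x$-sets, but the pointwise version is all that is needed.
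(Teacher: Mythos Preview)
Your proof is correct and follows exactly the paper's approach: divide \eqref{duham1} by $e^{i\om t}$ and show the integral converges. The paper's own proof is a two-line sketch that simply asserts the integral converges ``due to the restrictions placed upon $\psi_0$ and $q$''; you have filled in precisely the details behind that assertion (boundedness of $a$ via energy conservation, exponential decay of $q$), and your side remarks about the role of $\om$ and the interpretation of ``pointwise'' are accurate and add clarity.
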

\begin{proof}
Divide both sides of \eqref{duham1} by $e^{i \om t}$ and take the limit. The integral converges due to the restrictions placed upon $\psi_0$ and $q$.
\end{proof}
The most important takeaway from this result is that it indicates that $\psi(x,t)$ does not decay to zero (in the supremum norm)
in the large time limit. 
This can be make rigorous by drilling 
down into the integral term above  to get a more refined picture, but it turns out it is more interesting to view $\psi(x,t)$ in a frame that moves along with the solitary wave $q(x-t)$.  And so we put $x-t =l$ and 
$
\varphi(l,t) = \psi(x,t).
$
This converts  \eqref{duham1} to
\be\label{duhammove}
\varphi(l,t) = e^{i \om t} \psi_0(t+l) + \om \int_0^t e^{i (t-s)} a(s) q(t+l-s) ds.
\ee

We begin by considering the situation where $\psi_0(x) = 0$ and $a_0 = 1$, in which case $\varphi(l,t) = \gamma_\om(l,t)$ with
$$
\gamma_\om(l,t) :=  \om \int_0^t e^{i \om (t-s)} v_\om(s) q(t+l-s) ds.
$$
If we fix $l$ and take the Laplace transform of the above with respect to $t$, the convolution and frequency shifting identities get 
us
$$
\Gamma_\om(l,z) = \om Q_l(z-i \om) V_\om(z)
$$
where $$
Q_l(z):=\L[q(\cdot+l)](z) .
$$ With our standard assumption that  $e^{\rho|x|} q(x) \in W^{1,\infty}(\R;\R)$ we have
 $\|q(\cdot+l)\|_\rho < \infty$ for any $l \in \R$. Thus $Q_l(z)$ will be analytic when $\Re(z)>-\rho$. In turn  this implies that $\Gamma_\om(l,z)$ will inherit the simple pole at $z = -\theta_\om$ from $V_\om(z)$ and that this is the only singularity when $\Re(z) \ge -\rho/2$.
 It is easy to compute that
\be\label{sigma}
\textrm{Res}(\Gamma_\om(l,z),-\theta_\om) = \om r_\om\bunderbrace{ Q_l(-\theta_\om - i\om)}{\sigma_\om(l)}.
\ee
Thus the common wisdom implies that $\gamma_\om(l,t)$ looks like $\sigma_\om(l) e^{-\theta_\om t}$ as $t \to \infty$. We codify this in the following result, whose proof is so close to that of Theorem \ref{main} we omit it:
\begin{theorem}\label{main2}
Suppose that, for some $\rho >0$, $e^{\rho |x|} q(x)\in W^{1,\infty}(\R;\R)$. 
Then the the solution of \eqref{SHO} and \eqref{a eqn}
with initial data $\psi(x,0) = 0$ and $a(0) = a_0 $ satisfies (for $\om$ sufficiently large)
\be\label{third}
 \psi(t+l,t) = a_0 \om r_\om \sigma_\om(l)e^{-\theta_\om t} + \eta(l,t).
\ee
Here $\sigma_\om(l)$ is given in \eqref{sigma}, $r_\om = 1+\O(1/\om)$ and $\|\eta(l,\cdot)\|_{\rho'} < \infty$ for all $\rho' < \rho/2$ and every $l \in\R$.

Additionally if $e^{\rho|x|} \psi_0(x) \in L^\infty(\R;\C)$ then the solution
of \eqref{SHO} and \eqref{a eqn} with initial data $\psi(x,0) = \psi_0(x)$ and $a(0) = a_0 $ satisfies (for $\om$ sufficiently large)
\be\label{fourth}
 \psi(t+l,t) = \beta \om r_\om \sigma_\om(l) e^{-\theta_\om t}+ \eta(l,t)
\ee
for some finite constant $\beta = \beta(\psi_0,a_0)\in \R$; $r_\om$ and $\eta(l,t)$ satisfy the same estimates as above.
\end{theorem}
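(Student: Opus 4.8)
The plan is to transcribe, nearly line for line, the proof of Theorem~\ref{main}, with $\Gamma_\om(l,z)$ now playing the role that $V_\om(z)$ (for the first assertion) and $J_\om(z)V_\om(z)$ (for the second) played there, the frame variable $l$ being carried along as an inert parameter. First I would handle the case $\psi(x,0)=0$. By linearity of the system \eqref{SHO}--\eqref{a eqn} it suffices to take $a_0=1$, so that $\psi(t+l,t)=\gamma_\om(l,t)$ and, as computed just before the statement, $\Gamma_\om(l,z)=\om Q_l(z-i\om)V_\om(z)$. The hypothesis $e^{\rho|x|}q(x)\in W^{1,\infty}(\R;\R)$ gives $\|q(\cdot+l)\|_\rho<\infty$ and $\|(d/dt)\,q(\cdot+l)\|_\rho<\infty$ for every fixed $l$, so the estimates \eqref{elementary}--\eqref{not so elementary} show that $Q_l$ is analytic on $\Re(z)>-\rho$ and that $Q_l(z-i\om)=\O(1/|z|)$ as $z\to\infty$ in $\Re(z)\ge-\rho/2$ (note $|z-i\om|\to\infty$ there, since $\om$ is fixed). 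Together with Lemma~\ref{key lemma} this shows $\Gamma_\om(l,z)$ is analytic on $\Re(z)\ge-\rho/2$ except for one simple pole at $z=-\theta_\om$, whose residue is $\om r_\om\sigma_\om(l)$ by \eqref{sigma}.

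The Laplace inversion is where the single genuine subtlety — already present in the proof of Theorem~\ref{main} — reappears: although $\Gamma_\om(l,z)$ is itself $\O(1/|z|^2)$, subtracting off the pole term $\om r_\om\sigma_\om(l)/(z+\theta_\om)$ leaves only $\O(1/|z|)$ decay, which is too slow for a direct inversion theorem. As in Theorem~\ref{main} I would therefore insert a ``fudge factor'': one checks $\lim_{z\to\infty}z\,\Gamma_\om(l,z)=0$, and then
$$
B_\om(l,z):=\Gamma_\om(l,z)-{\om r_\om\sigma_\om(l)\over z+\theta_\om}+{\om r_\om\sigma_\om(l)\over z+2\rho}
$$
has $\lim_{z\to\infty}z^2 B_\om(l,z)$ finite, hence $|B_\om(l,z)|\le C(l)/|z|^2$ for large $|z|$ in $\Re(z)\ge-\rho/2$; Lemma~76.4 in~\cite{korner} then yields $e^{\rho't}\L^{-1}[B_\om(l,\cdot)](t)\to 0$ for every $\rho'<\rho/2$. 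Inverting term by term and folding the $e^{-2\rho t}$ summand and $\L^{-1}[B_\om(l,\cdot)]$ into $\eta(l,t)$ gives \eqref{third}; the bound $\|\eta(l,\cdot)\|_{\rho'}<\infty$ holds because $\rho'<\rho/2<2\rho$.

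For the case $\psi_0\neq 0$ I would start from \eqref{duhammove}. Its first term $e^{i\om t}\psi_0(t+l)$ is $\O(e^{-\rho t})$ for fixed $l$ and gets absorbed into $\eta(l,t)$. In the Duhamel integral I would split $a(s)=a_0 v_\om(s)+\L^{-1}[J_\om V_\om](s)$ via \eqref{here is A}; the $a_0 v_\om$ contribution is $a_0\gamma_\om(l,t)$, already understood. The remaining piece has Laplace transform $\om Q_l(z-i\om)J_\om(z)V_\om(z)$, and since $J_\om$ is analytic on $\Re(z)>-\rho$ (established in the proof of Theorem~\ref{main}) its only singularity on $\Re(z)\ge-\rho/2$ is again the simple pole at $-\theta_\om$, with residue $\om r_\om\sigma_\om(l)\,J_\om(-\theta_\om)$; here $J_\om(-\theta_\om)=\int_0^\infty e^{\theta_\om t}j_\om(t)\,dt$ is real (since $j_\om$ is real-valued) and finite (since $\|j_\om\|_\rho\le C/\om<\infty$ and $\theta_\om<\rho$ for $\om$ large). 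Running the same fudge-factor-and-invert step and summing all contributions delivers \eqref{fourth} with $\beta=a_0+J_\om(-\theta_\om)$, equivalently $\beta=\alpha/r_\om$ in the notation of \eqref{second}, a real constant depending on $(\psi_0,a_0)$.

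I expect the main obstacle to be not any individual estimate — each is a transcription of a step in the proof of Theorem~\ref{main} — but the bookkeeping: verifying that every $l$-dependent constant is harmless and, in particular, that $Q_l(z-i\om)$ has the claimed analyticity and $\O(1/|z|)$ decay uniformly enough along the vertical lines $\Re(z)=-\rho'$ with $\rho'<\rho/2$. Both follow from applying \eqref{not so elementary} to $q(\cdot+l)$ and noting that $|z-i\om|\to\infty$ as $|z|\to\infty$ for fixed $\om$. Since the pole location $-\theta_\om$ and the residue normalization $r_\om$ are supplied by Lemma~\ref{key lemma} and carry no $l$-dependence, the final statement remains clean.
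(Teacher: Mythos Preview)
Your proposal is correct and follows exactly the approach the paper intends: the paper omits the proof entirely, stating only that it is ``so close to that of Theorem~\ref{main} we omit it,'' and your transcription of the residue-plus-fudge-factor argument to $\Gamma_\om(l,z)=\om Q_l(z-i\om)V_\om(z)$ (and to $\om Q_l(z-i\om)J_\om(z)V_\om(z)$ for the $\psi_0\neq 0$ case) is precisely what is meant. The bookkeeping you flag --- analyticity and $\O(1/|z|)$ decay of $Q_l(z-i\om)$ from \eqref{not so elementary}, the identification $\beta=a_0+J_\om(-\theta_\om)$, and absorption of $e^{i\om t}\psi_0(t+l)$ into $\eta$ --- is all in order.
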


\begin{remark} The result above is pointwise in $l$.
It is quite possible that a stronger mode of convergence holds here, though the technical difficulty
in establishing this is motivation enough to leave that for another article.
\end{remark}

To close out this section, we now describe  the ``asymptotic profile'' $\sigma_\om(l)$ in greater detail.
The analysis here is formal though it could be made rigorous if we stack enough hypotheses on $q(x)$.
Recall that $\sigma_\om(l)=
Q_l(-\theta_\om - i \om).  
$
Since $\theta_\om \to 0$ as $\om \to \infty$ and $Q_l(z)$ is analytic in $z$ we have
$
Q_l(\theta_\om - i \om) = Q_l(- i \om) + \O(\theta_\om).
$
Then by definition we have
$$
Q_l(-i \om) = \int_0^\infty e^{i \om t} q(t + l) dt = e^{- i \om l} \int_{l}^\infty e^{i \om x} q(x) dx.
$$
And so we see that 
$
e^{i \om l} Q_l(-i \om) \to 2\pi \hat{q}(-\om)
$
as $l\to-\infty$ and $e^{i \om l} Q_l(-i \om) \to 0$ as $l \to \infty$. This latter convergence will be exponentially fast. 
Lastly the same sort of calculation that led to \eqref{not so elementary} gives
$
Q_l(-i \om) = i q(l)/\om +\O(1/\om^2).
$

Putting everything together we have
$$
\sigma_\om(l) \sim
\begin{cases}
 2\pi  \hat{q}(-\om) e^{- i \om l} & l \ll 0 \\
 {i }  q(l)/\om & l \sim 0\\
 \text{exponential decay} & l \gg 0.
\end{cases} 
$$
In short, $\sigma_\om(l)$ looks like a scalar multiple of  the solitary wave profile plus a trailing periodic wave of small amplitude and frequency $\om$.

\section{Simulations}\label{sims}

We have simulated solutions of our model with a variety choices of the profile $q(x)$ and frequency $\om$.
We always take $\psi_0(x) = 0$ since our analytic results indicate the effects of this part of the initial data are transient and do not alter the long time behavior (or at least the rate of decay of $a(t)$). Likewise $a_0 =1$ in all cases. 

Our method is straightforward:
we treat the system as an ODE for $(\psi,a)$ as in
 \eqref{SHO} and \eqref{a eqn} and simulate using an RK4 algorithm. We implement the integrals in \eqref{a eqn} via Simpson's rule.
We also compute $\sigma_\om(l)$ numerically, as for most functions closed expressions are hard to obtain. We again use Simpson's rule for the computation. All simulations were done in MATLAB.

Before we get into the results we note that with our method high accuracy/long time simulations of the problem are challenging to obtain even for modestly large values of $\om$. Roughly speaking, to accurately resolve a decay rate like $e^{- \theta_\om t}$ we would need to simulate out to times of $\O(1/\theta_\om)$ and (for the RK4 method we use) have a temporal step size which is $\O(\theta_\om)$. The long time of integration in turn 
 implies a large spatial domain (also $\O(1/\theta_\om)$) is needed, since $q(x-t)$ propagates in space. And we need to resolve that spatial domain at the same step size as the temporal one. All these considerations tell us that we need $\O(1/\theta_\om^4)$ operations, at a bare minimum, to experimentally determine $\theta_\om$ in a quantitatively reliable fashion. And in the most interesting cases, $\theta_\om$ is exponentially small in $\om$, meaning that we quickly reach a computational bottleneck.

In light of these inherent difficulties we simply integrate out to $t = 1000$ and choose our step size to be $\pi(10 \om)^{-1}$.
For the largest values of $\om$, our simulations have been pushed past the point at which we can be convinced of their quantitative reliability and instead we view them as a qualitative illustration of our results (which are, after all, fully justified) and the phenomena they describe.

We show our results in the figures which follow. Each figure contains the same four sorts of graphs.
\begin{itemize}
\item {\it Upper left panel:}
A semilog plot of the numerical solution $a(t)$ vs $t$ for the entirety of the run. In the same panel is shown the graph of $\alpha e^{-2\pi |\hat{q}(\om)|^2 t}$ where $\alpha$ is just a scaling factor used to make the graph readable. Theorem \ref{main} tells us that these should be nearly parallel when $\om$ is large and in fact we do see this. 
\item  {\it Upper right panel:} A plot of $a(t)$ vs $t$ during the very beginning of the simulation. In each case we see that the $a(t)$ oscillates a few times and quickly ``settles down'' into the slow decay. The time it takes to settle down does not seem to depend on $\om$, as predicted by Theorem \ref{main}. 
\item  {\it Lower left panel:} The graphs of the real and imaginary parts of $\psi$ vs $x$, at $t = 1000$, the end of the run. The figure is zoomed in on the leading edge of the solution, located near $x = 1000$.  
\item {\it Lower right panel:} The numerically computed asymptotic profile function $\sigma_\om(l)$ vs $l$. The horizontal scale is arranged to match that of the previous panel. Theorem \ref{main2} tells us that $\psi(x,t)$ should look much like (a scalar multiple of) $\sigma_\om(l)$ as $t \to \infty$ and indeed we see exactly this.
\end{itemize}

\subsection{$q(x) = \sech(x)$} We lead off with this choice for $q(x)$ because of the ubiquity of hyperbolic secant in profiles for solitary waves.
One has $\hat{q}(\om) = \sech(\pi \om/2)/2$ and consequently we have
$$
\theta_\om = {\pi^2 \over 2} \sech^2\left( {\pi \om \over 2}\right) (1 + \O(1/\om)).
$$
We take $\om = 2,\ 4$ and $8$. Results are shown in Figures \ref{sechfig2}-\ref{sechfig8}.  
When $\om = 8$ both the decay of $a(t)$ and the trailing oscillatory tails of $\psi(x,1000)$ and $\sigma_\om(l)$ have vanished to the naked eye.

\subsection{$q(x) = e^{-x^2}$}
We selected this because the ultra-rapid decay of its Fourier transform renders the decay rate incredibly small
even at modest values of $\om$. We have
$$
\theta_\om = {\pi \over 2} e^{-\om^2/2}(1 + \O(1/\om)).
$$
We show results for $\om = 4$ and $\om = 6$, in Figures \ref{gaussianfig4} and \ref{gaussianfig6}. Note how there is essentially no decay even at $\om = 6$. Likewise at this value the trailing oscillatory waves in $\psi(x,1000)$ and $\sigma_\om(l)$ are invisible.

\subsection{$q(x) = e^{-|x|}$.}
This ``peakon'' profile 
 is not analytic and consequently the decay rate merely goes to zero algebraically fast and as such the decay is more obvious at larger values of $\om$. To wit
 $$
 \theta_\om = {2 \over (1+\om^2)^2} (1+ \O(1/\om)).
 $$
We show results for $\om = 4,\ 8$ and $16$ in Figures \ref{peakonfig4}-\ref{peakonfig16}.
Both the decay and the tails are visible at $\om = 16$.

\subsection{$q(x) = (1-|x|)_+$.} The subscript ``$+$'' mean to take the positive part, which is to say that $q(x)$ is the ``tent'' map.
This is also non-analytic and its Fourier transform is sometimes zero (unlike the others) and as such if we fine tune $\om$ we can get solutions which do not decay at all. Specifically we have
 $$
 \theta_\om = {1 \over 2} \textrm{sinc}(\om/2)^4
  $$
  so if we take $\om$ to be an even multiple of $\pi$ we should see no decay/no tails. We show results here (Figures \ref{tentfig2pi}-\ref{tentfig5pi}) for 
  $\om = 2 \pi,\ 3 \pi,\ 4\pi$ and $5\pi$ and we see exactly this behavior.

\section{Conclusions, remarks and future directions}

A key takeaway of this article is that in this simple model of radiating solitary waves the rate of attenuation in the amplitude
is very slow, but nevertheless exponential. This is in line with the results of \cite{benilov}  and not 
the algebraic decay rate the author predicted in \cite{GSWW}. The extremely slow rate of decay predicted by the main results provides further evidence that radiating solitary waves present challenging complications both on the numerical and analytic sides. Indeed, even in this simple model really capturing the rate of decay numerically for large values of $\om$ would require a much more sophisticated approach than is used here.

One can reasonably ask, however, whether or not our model here will truly be reflective of the systems which possess radiating solitary waves. After all, the derivation of  \eqref{SHO}-\eqref{energy}  is {\it ad hoc} and (as is always the case in such models) important features/considerations have been omitted. Indeed, in our model only the amplitude of the solitary wave is variable and it has fixed speed. In nearly all systems with solitary waves, the speed, amplitude {\it and wavelength} of the solitary wave are linked. This is particularly important in the proofs of stability of solitary waves in KdV \cite{pego-weinstein} and FPUT \cite{friesecke-pego} and in the derivation of effective equations for solitary waves in potentials \cite{holmer}. By restricting  to a fixed speed and width, our model is ultimately linear in its unknowns and this in turn makes the analysis by Laplace transform possible. Preliminary attempts at incorporating variable speed/wavelength result in nonlinear systems and, consequently, there are substantial technical challenges that make the application of the methods used here non-obvious. One avenue is to adapt the refined asymptotics for stationary problems in \cite{akylas-yang} to this time-dependent setting. Success in that venture will lead to the next, and most exciting challenge:
 connecting such results rigorously to a full system with radiating solitary waves. Work is underway.

  \begin{figure}
\centering
    \includegraphics[width=.75\textwidth]{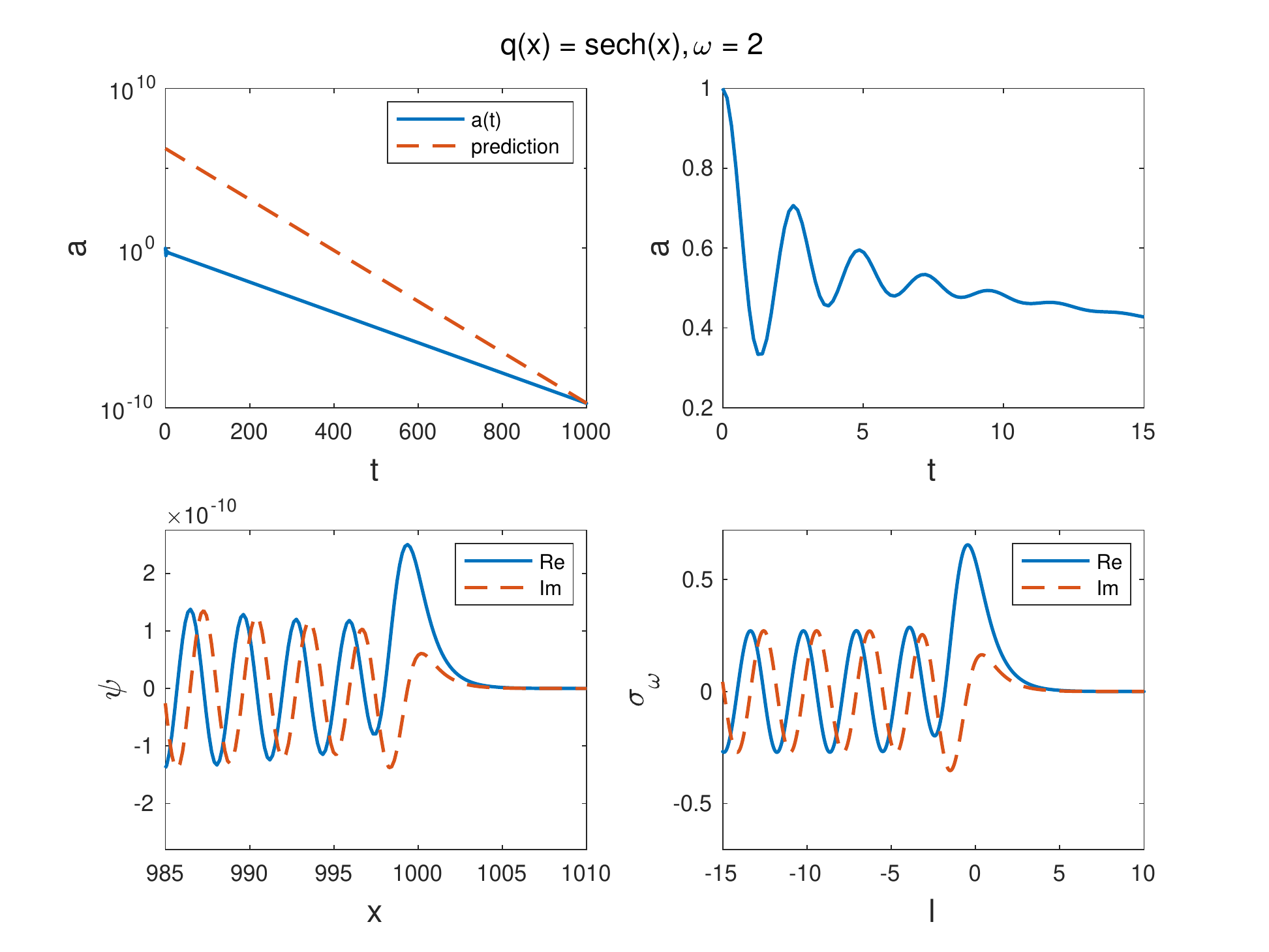}
    \caption{}
    \label{sechfig2}
\end{figure}
\begin{figure}
\centering
    \includegraphics[width=.75\textwidth]{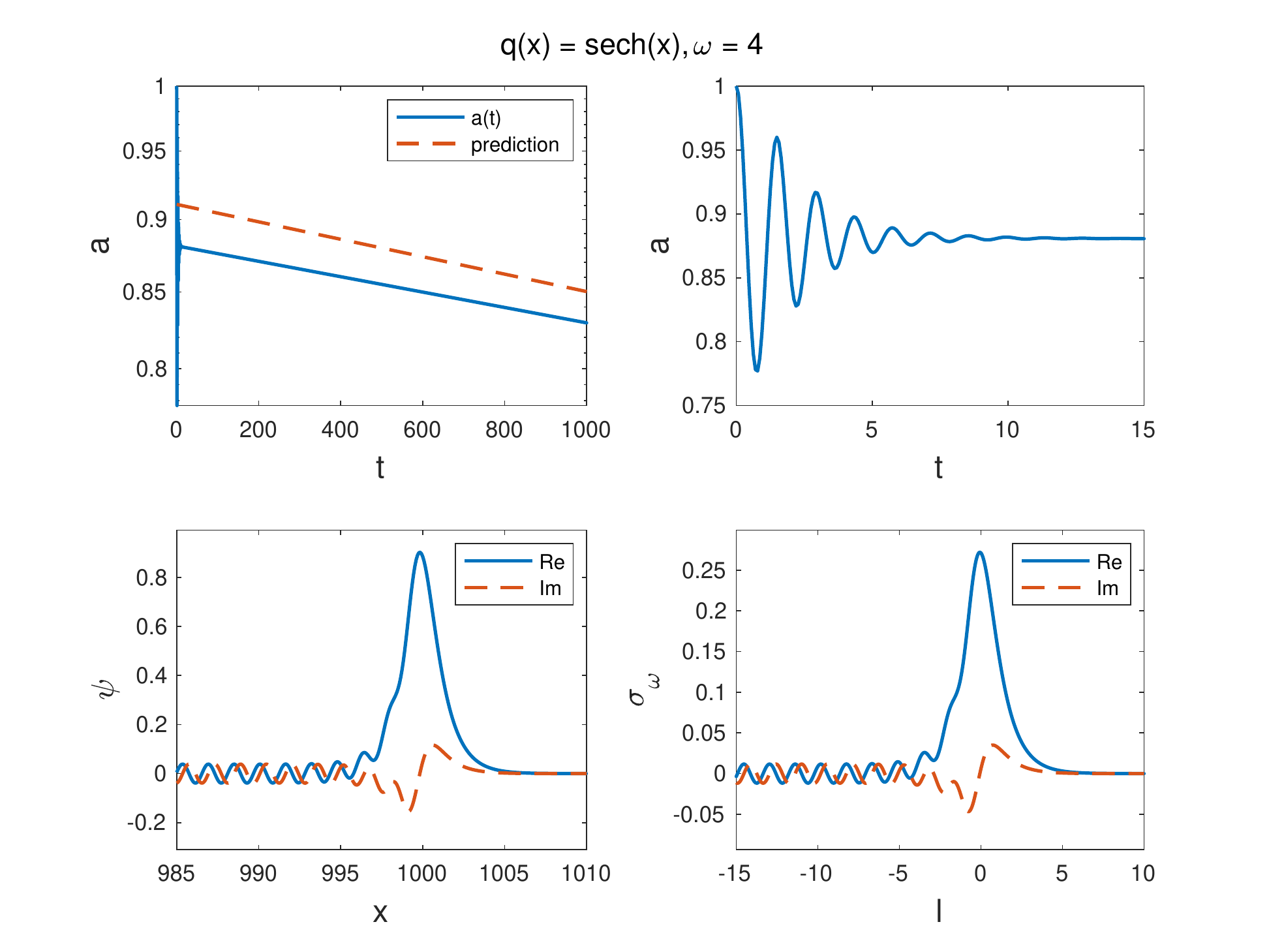}    \caption{}
      \label{sechfig4}
\end{figure}
\begin{figure}
\centering
    \includegraphics[width=.75\textwidth]{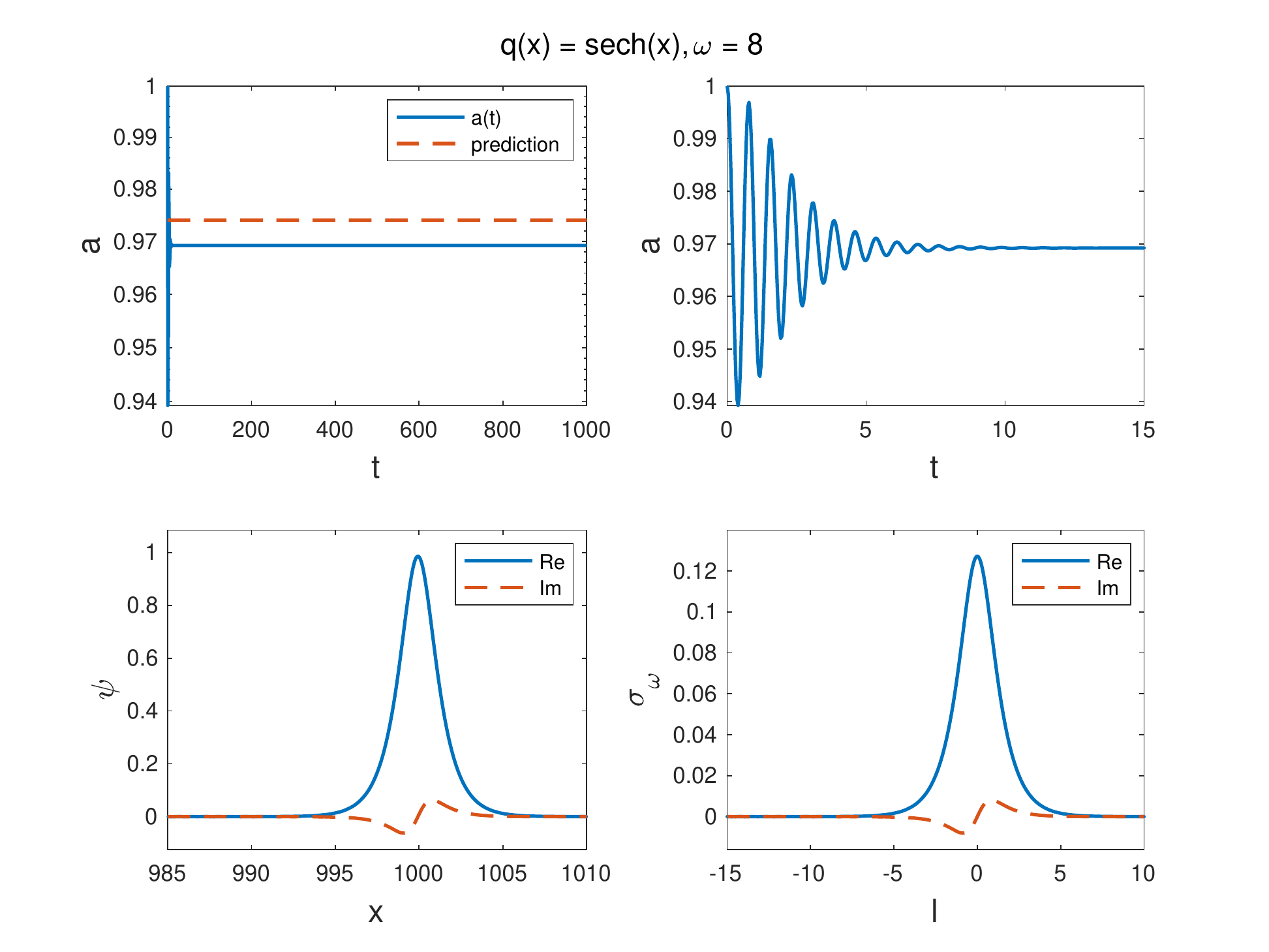}    \caption{}
    \label{sechfig8}
\end{figure}

  \begin{figure}
\centering
    \includegraphics[width=.75\textwidth]{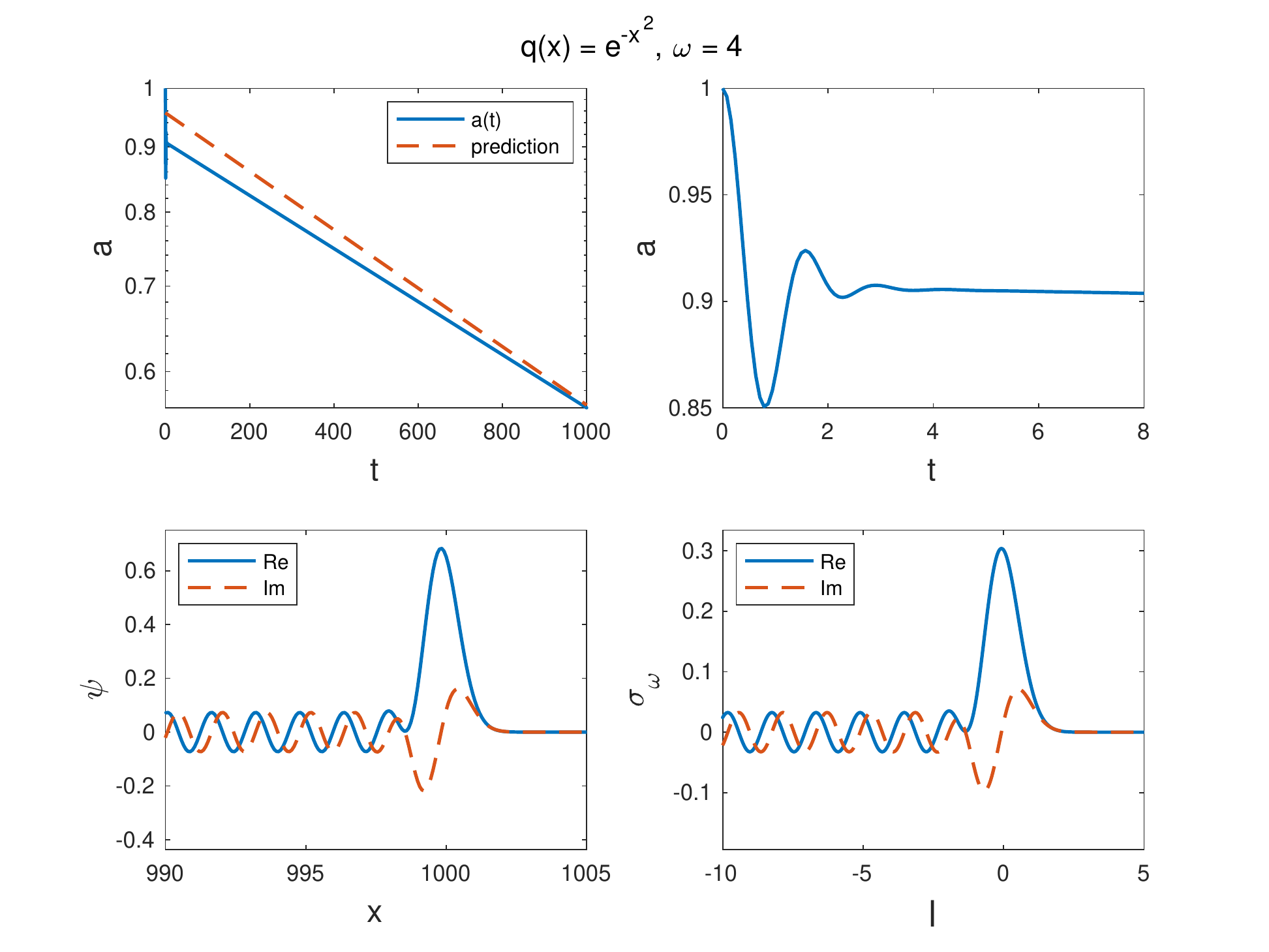}    \caption{}
      \label{gaussianfig4}
\end{figure}
\begin{figure}
\centering
    \includegraphics[width=.75\textwidth]{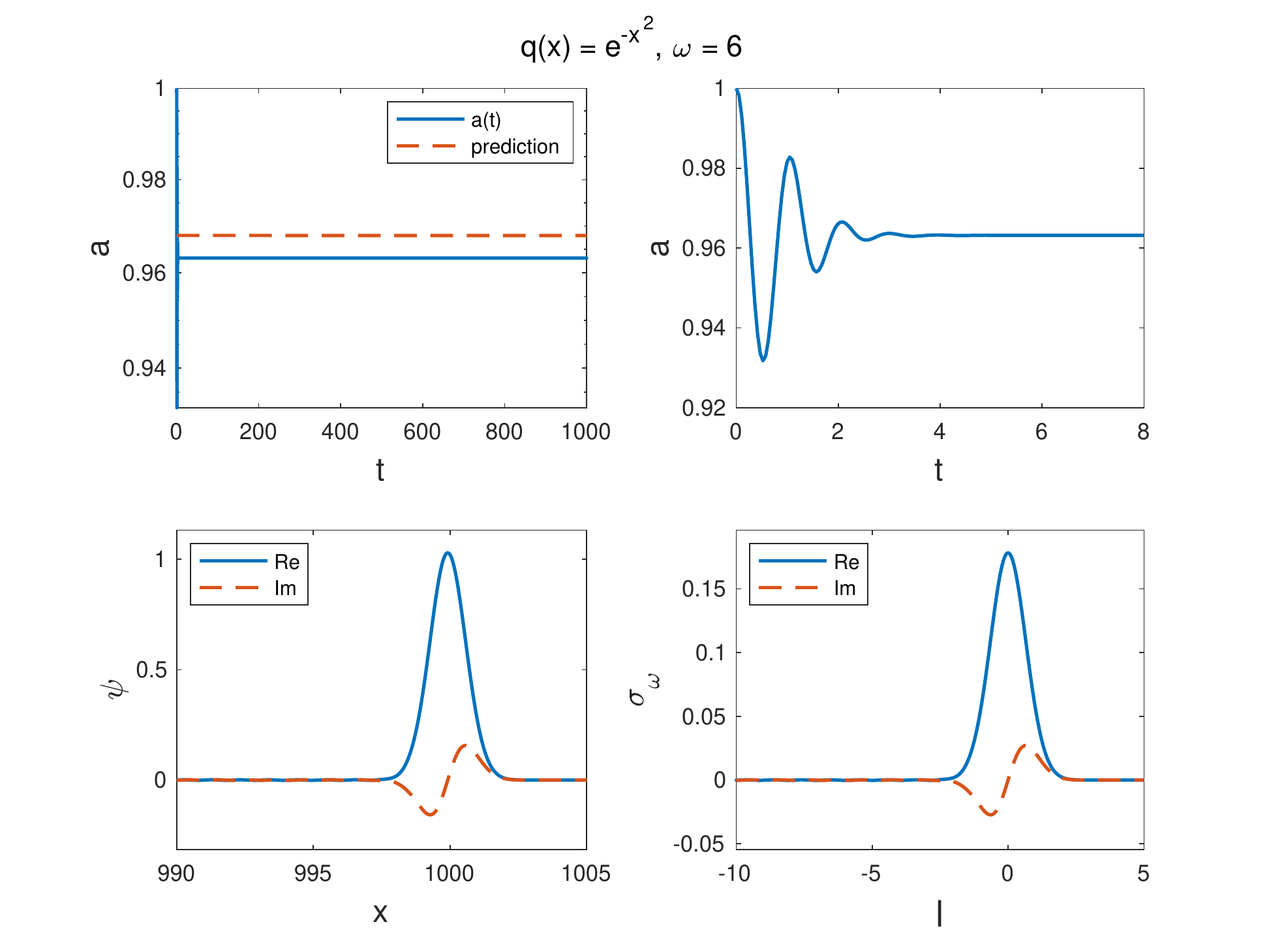}    \caption{}
     \label{gaussianfig6}
\end{figure}

  \begin{figure}
\centering
    \includegraphics[width=.75\textwidth]{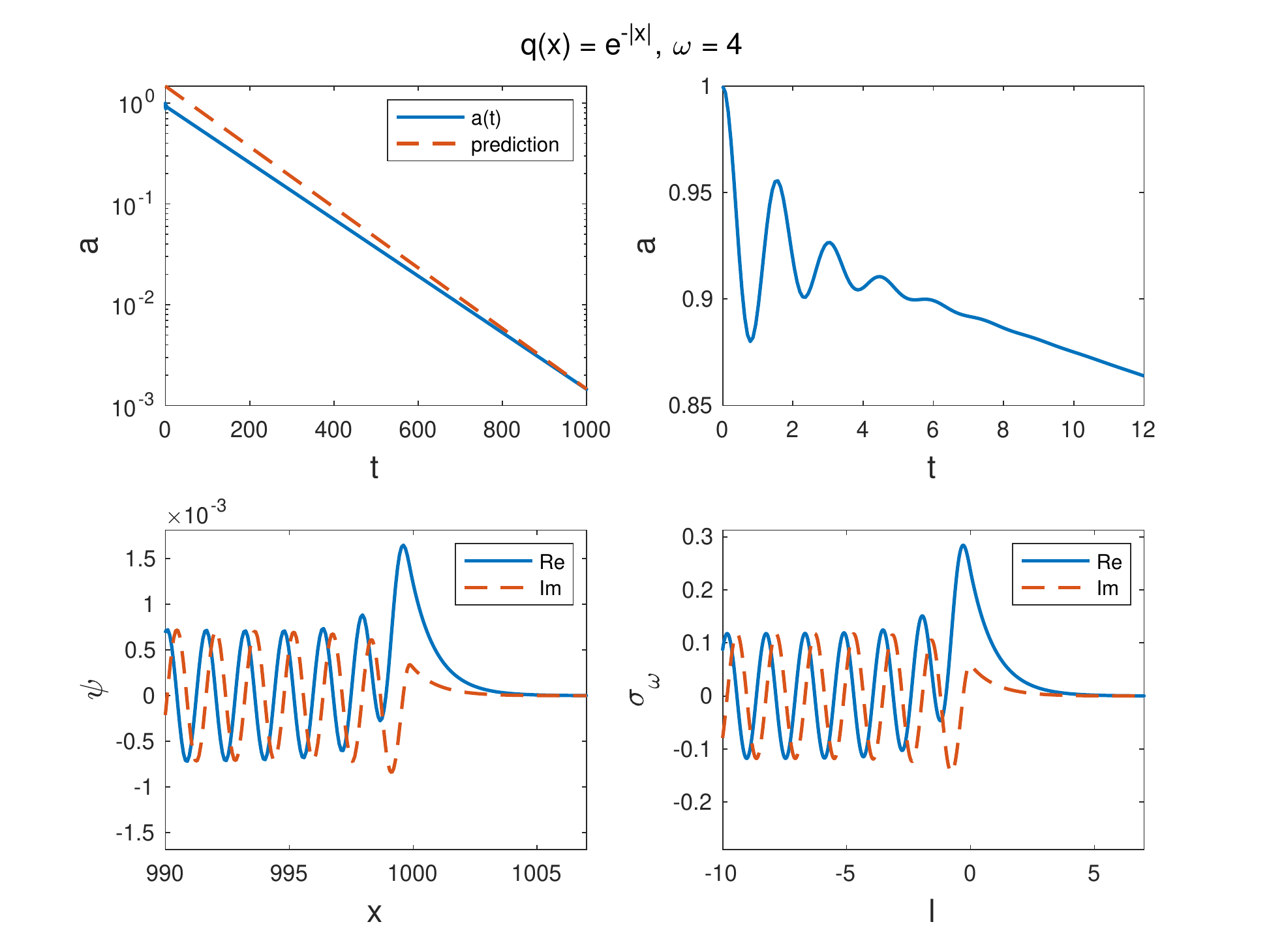}    \caption{}
      \label{peakonfig4}
\end{figure}
\begin{figure}
\centering
    \includegraphics[width=.75\textwidth]{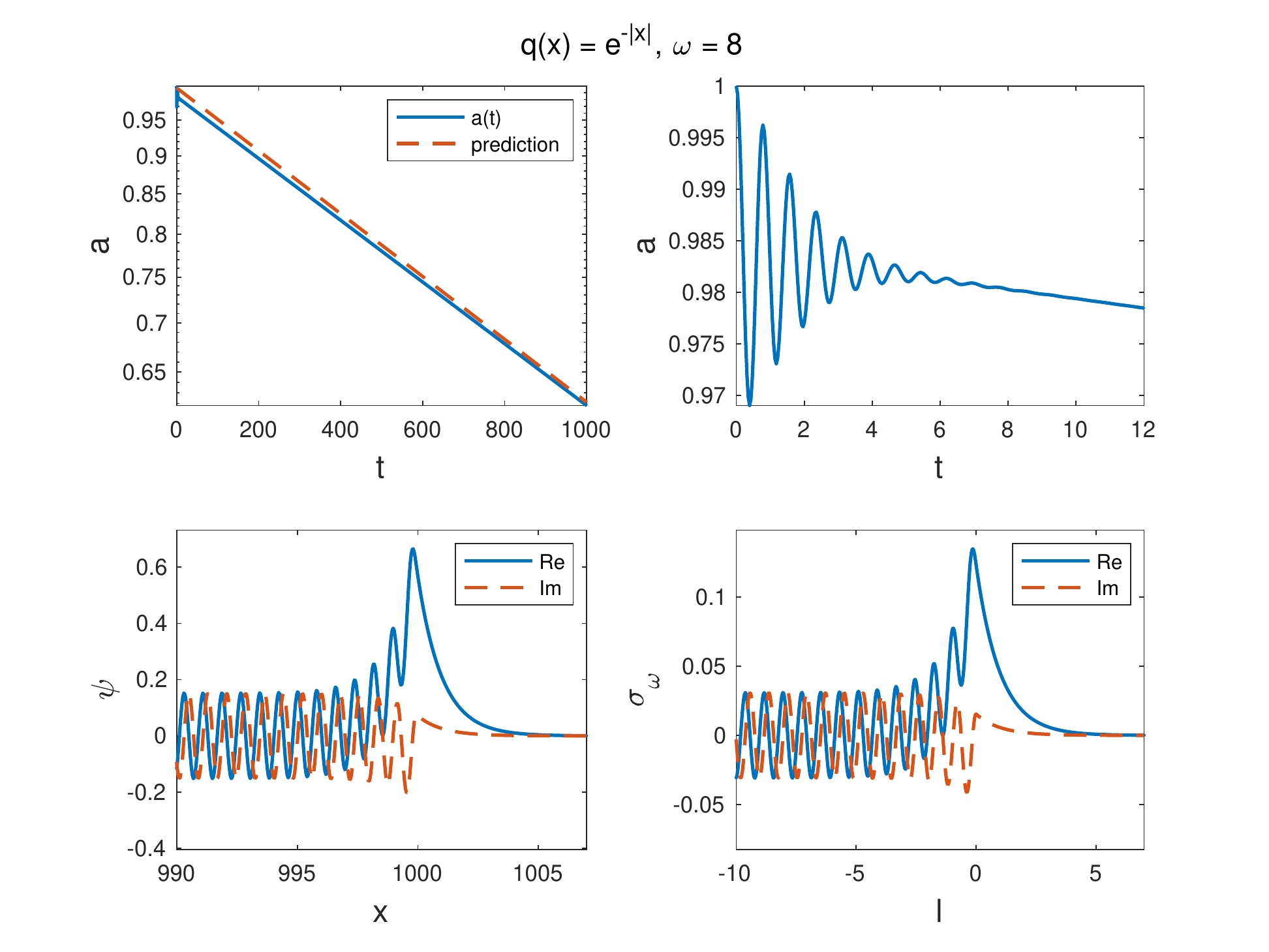}    \caption{}
    \label{peakonfig8}
\end{figure}
\begin{figure}
\centering
    \includegraphics[width=.75\textwidth]{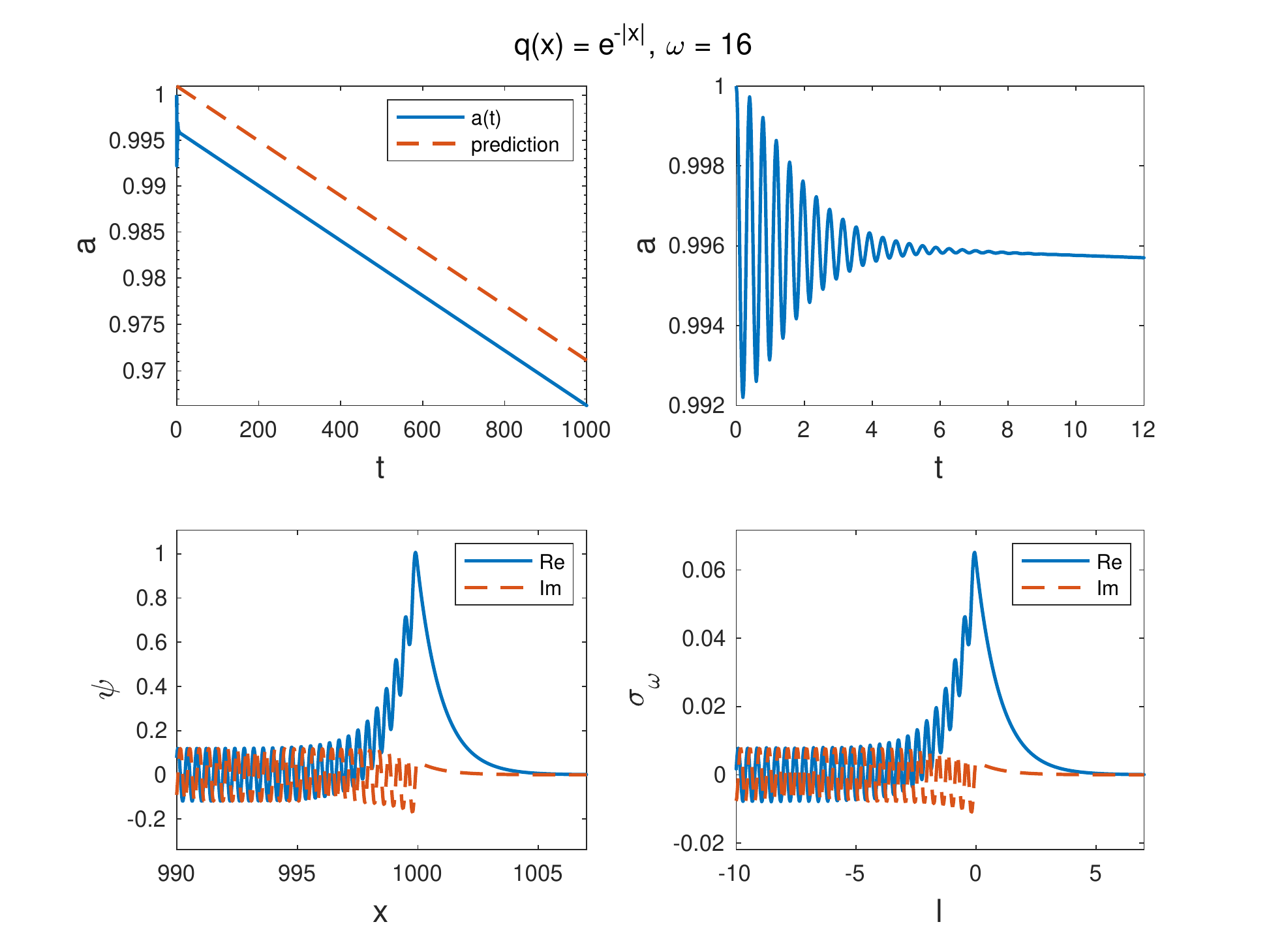}    \caption{}
    \label{peakonfig16}
\end{figure}

\begin{figure}
\centering
    \includegraphics[width=.75\textwidth]{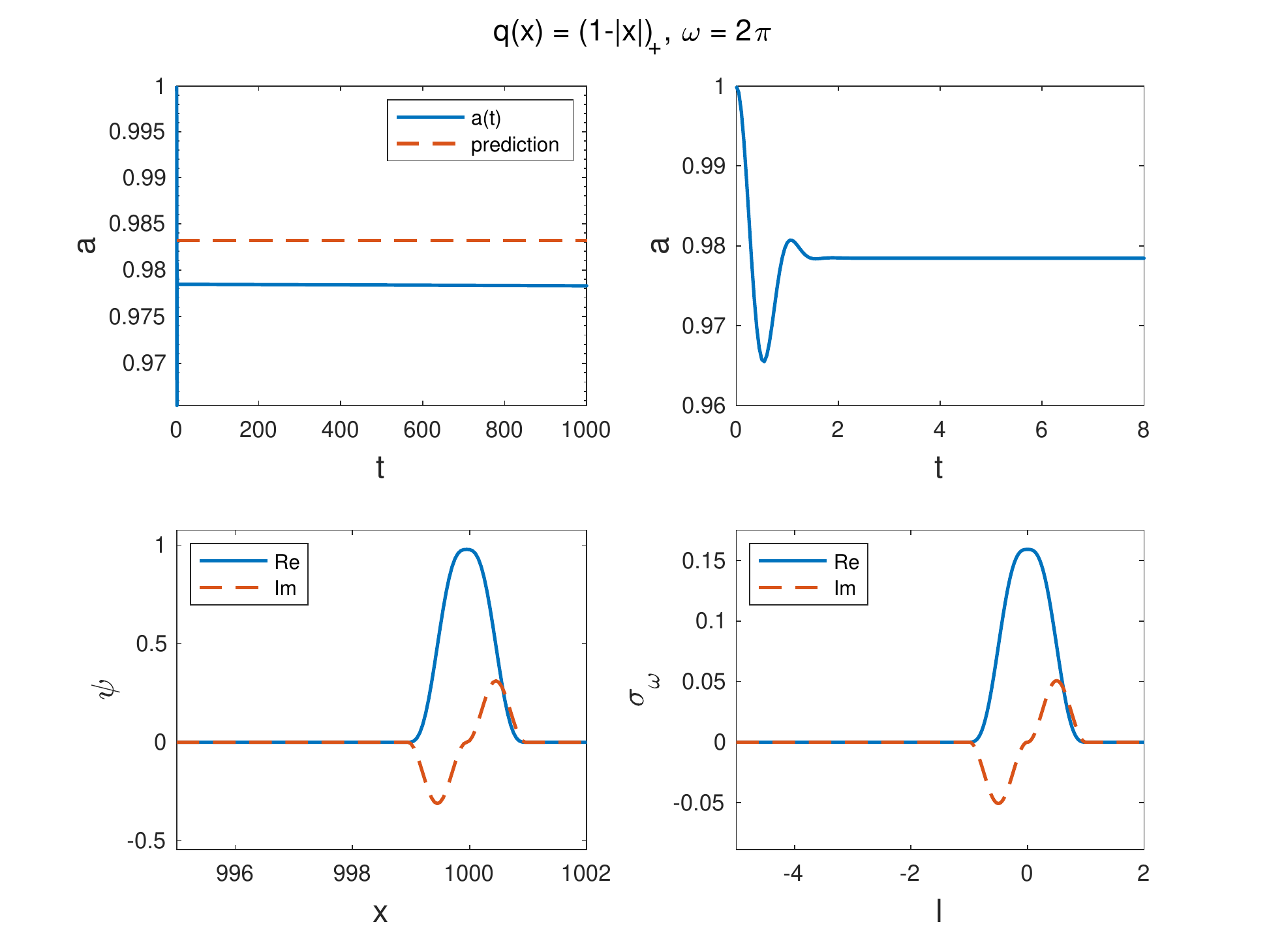}    \caption{}
    \label{tentfig2pi}
\end{figure}
\begin{figure}
\centering
    \includegraphics[width=.75\textwidth]{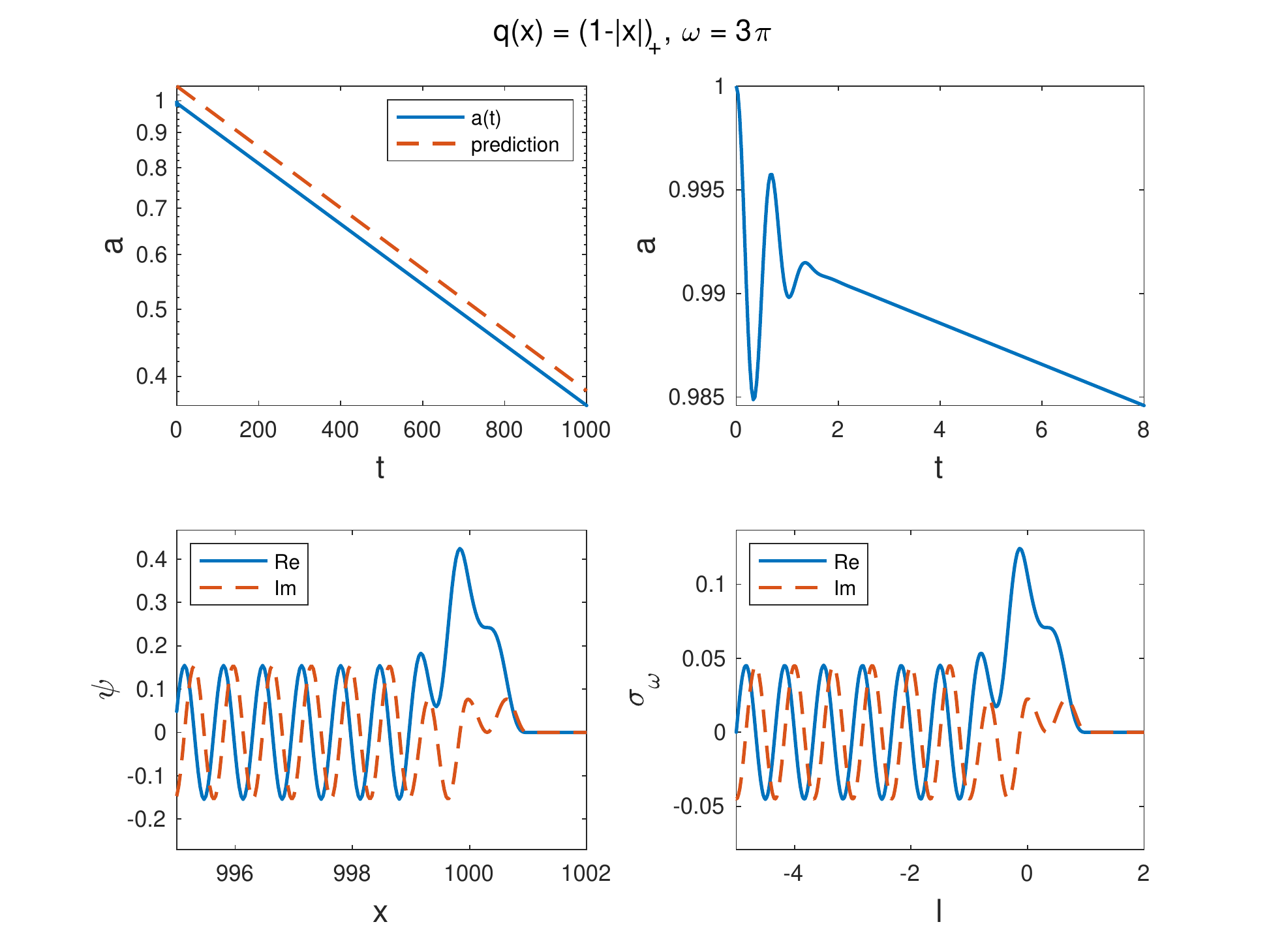}    \caption{}
      \label{tentfig3pi}
\end{figure}
\begin{figure}
\centering
    \includegraphics[width=.75\textwidth]{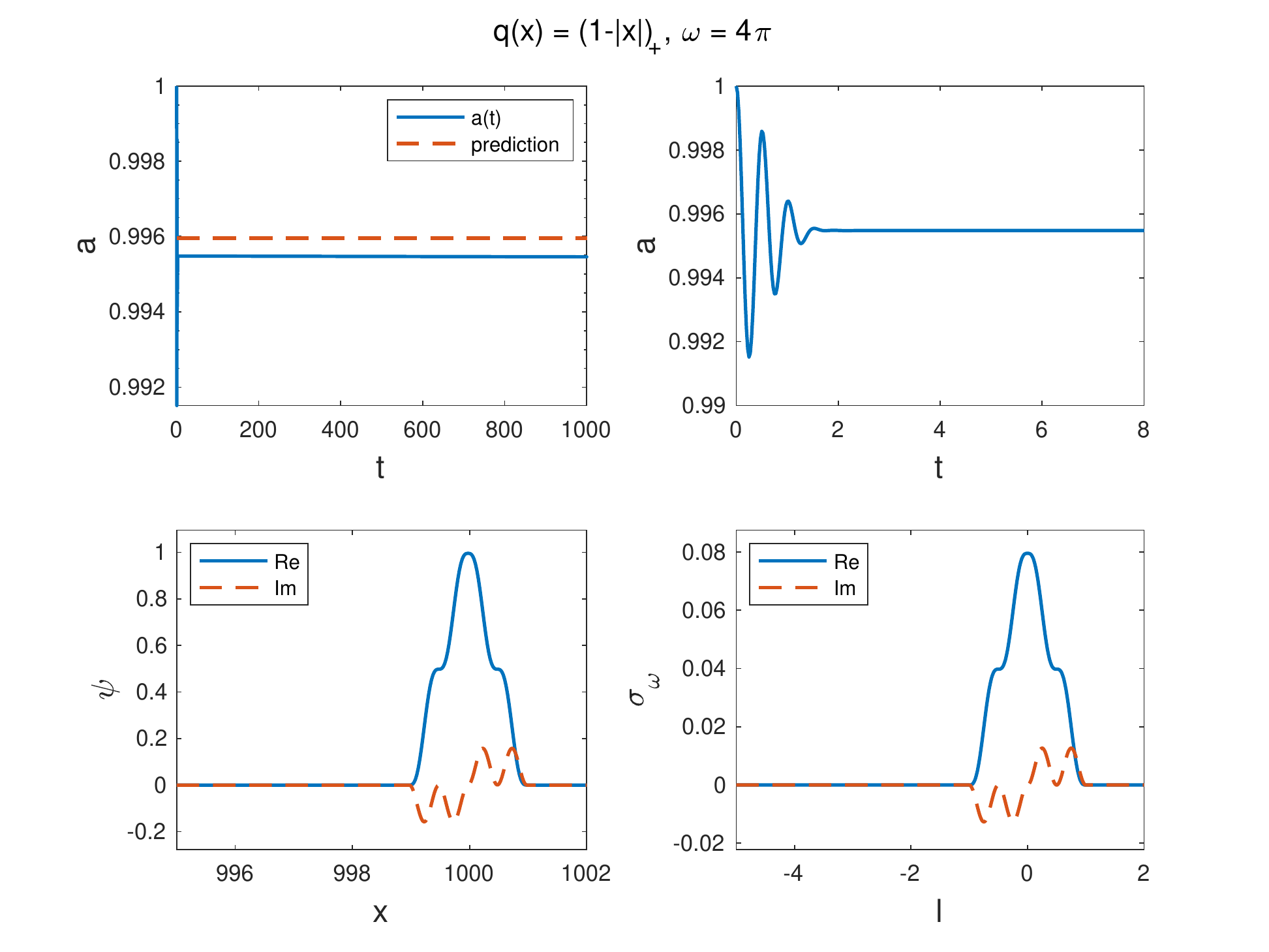}    \caption{}
    \label{tentfig4pi}
\end{figure}
\begin{figure}
\centering
    \includegraphics[width=.75\textwidth]{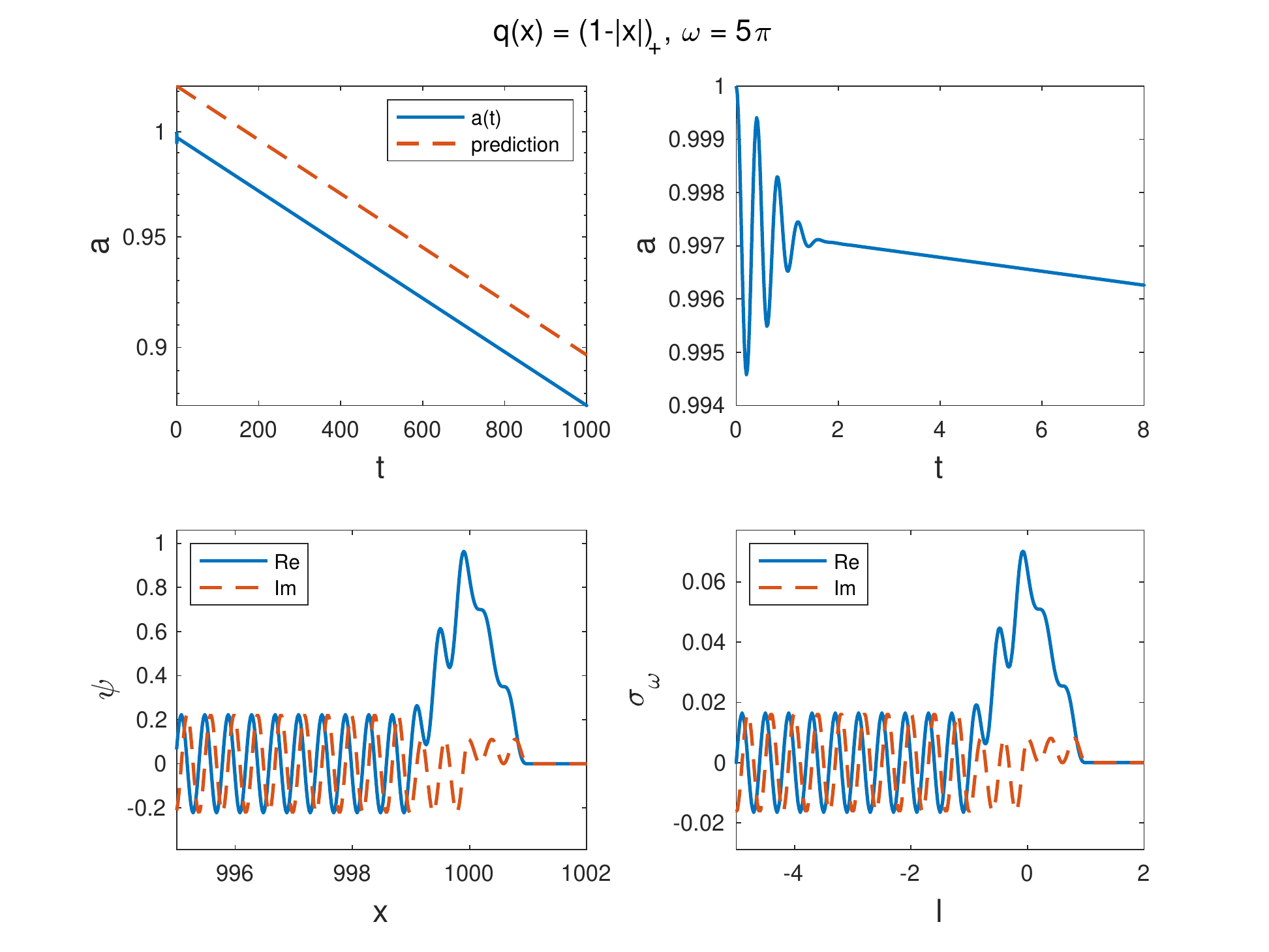}    \caption{}
    \label{tentfig5pi}
\end{figure}

\bibliographystyle{siam}
\bibliography{simplestmodel-bib}{}

\begin{thebibliography}{10}

\bibitem{akylas-yang}
{\sc T.~R. Akylas and T.-S. Yang}, {\em On short-scale oscillatory tails of
  long-wave disturbances}, Stud. Appl. Math., 94 (1995), pp.~1--20.

\bibitem{beale}
{\sc J.~T. Beale}, {\em Exact solitary water waves with capillary ripples at
  infinity}, Comm. Pure Appl. Math., 44 (1991), pp.~211--257.

\bibitem{bellman-cooke}
{\sc R.~Bellman and K.~L. Cooke}, {\em Differential-difference equations},
  Academic Press, New York-London, 1963.

\bibitem{benilov}
{\sc E.~S. Benilov, R.~Grimshaw, and E.~P. Kuznetsova}, {\em The generation of
  radiating waves in a singularly-perturbed {K}orteweg-de {V}ries equation},
  Phys. D, 69 (1993), pp.~270--278.

\bibitem{bona-etal}
{\sc J.~L. Bona, V.~A. Dougalis, and D.~E. Mitsotakis}, {\em Numerical solution
  of {B}oussinesq systems of {K}d{V}-{K}d{V} type. {II}. {E}volution of
  radiating solitary waves}, Nonlinearity, 21 (2008), pp.~2825--2848.

\bibitem{chirilus-bruckner-etal}
{\sc M.~Chirilus-Bruckner, C.~Chong, O.~Prill, and G.~Schneider}, {\em Rigorous
  description of macroscopic wave packets in infinite periodic chains of
  coupled oscillators by modulation equations}, Discrete Contin. Dyn. Syst.
  Ser. S, 5 (2012), pp.~879--901.

\bibitem{faver-spring}
{\sc T.~E. Faver}, {\em Nanopteron-stegoton traveling waves in spring dimer
  {F}ermi-{P}asta-{U}lam-{T}singou lattices}, Quart. Appl. Math., 78 (2020),
  pp.~363--429.

\bibitem{faver-mim}
\leavevmode\vrule height 2pt depth -1.6pt width 23pt, {\em Small mass
  nanopteron traveling waves in mass-in-mass lattices with cubic {FPUT}
  potential}, J. Dynam. Differential Equations, 33 (2021), pp.~1711--1752.

\bibitem{faver-goodman-wright}
{\sc T.~E. Faver, R.~H. Goodman, and J.~D. Wright}, {\em Solitary waves in
  mass-in-mass lattices}, Z. Angew. Math. Phys., 71 (2020), pp.~Paper No. 197,
  20.

\bibitem{faver-wright}
{\sc T.~E. Faver and J.~D. Wright}, {\em Exact diatomic
  {F}ermi-{P}asta-{U}lam-{T}singou solitary waves with optical band ripples at
  infinity}, SIAM J. Math. Anal., 50 (2018), pp.~182--250.

\bibitem{friesecke-pego}
{\sc G.~Friesecke and R.~L. Pego}, {\em Solitary waves on
  {F}ermi-{P}asta-{U}lam lattices. {IV}. {P}roof of stability at low energy},
  Nonlinearity, 17 (2004), pp.~229--251.

\bibitem{GMWZ}
{\sc J.~Gaison, S.~Moskow, J.~D. Wright, and Q.~Zhang}, {\em Approximation of
  polyatomic {FPU} lattices by {K}d{V} equations}, Multiscale Model. Simul., 12
  (2014), pp.~953--995.

\bibitem{nachbin-etal}
{\sc J.~Garnier, J.~C. Mu\~{n}oz Grajales, and A.~Nachbin}, {\em Effective
  behavior of solitary waves over random topography}, Multiscale Model. Simul.,
  6 (2007), pp.~995--1025.

\bibitem{GSWW}
{\sc N.~Giardetti, A.~Shapiro, S.~Windle, and J.~D. Wright}, {\em Metastability
  of solitary waves in diatomic {FPUT} lattices}, Math. Eng., 1 (2019),
  pp.~419--433.

\bibitem{hadadifard-wright}
{\sc F.~Hadadifard and J.~D. Wright}, {\em Mass-in-mass lattices with small
  internal resonators}, Stud. Appl. Math., 146 (2021), pp.~81--98.

\bibitem{HHMCKKYV}
{\sc S.~Hauver, X.~He, D.~Mei, E.~G. Charalampidis, P.~G. Kevrekidis, E.~Kim,
  J.~Yang, and A.~Vainchtein}, {\em Lattices with internal resonator defects},
  Phys. Rev. E, 98 (2018), p.~032902.

\bibitem{hoffman-wright}
{\sc A.~Hoffman and J.~D. Wright}, {\em Nanopteron solutions of diatomic
  {F}ermi-{P}asta-{U}lam-{T}singou lattices with small mass-ratio}, Phys. D,
  358 (2017), pp.~33--59.

\bibitem{holmer}
{\sc J.~Holmer}, {\em Dynamics of {K}d{V} solitons in the presence of a slowly
  varying potential}, Int. Math. Res. Not. IMRN,  (2011), pp.~5367--5397.

\bibitem{johnson-wright}
{\sc M.~A. Johnson and J.~D. Wright}, {\em Generalized solitary waves in the
  gravity-capillary {W}hitham equation}, Stud. Appl. Math., 144 (2020),
  pp.~102--130.

\bibitem{joshi-lustri}
{\sc N.~Joshi and C.~J. Lustri}, {\em Generalized solitary waves in a
  finite-difference {K}orteweg--de {V}ries equation}, Stud. Appl. Math., 142
  (2019), pp.~359--384.

\bibitem{korner}
{\sc T.~W. K\"{o}rner}, {\em Fourier analysis}, Cambridge University Press,
  Cambridge, 1988.

\bibitem{liu-yue}
{\sc Y.~Liu and D.~K.~P. Yue}, {\em On generalized {B}ragg scattering of
  surface waves by bottom ripples}, J. Fluid Mech., 356 (1998), pp.~297--326.

\bibitem{lombardi}
{\sc E.~Lombardi}, {\em Oscillatory integrals and phenomena beyond all
  algebraic orders}, vol.~1741 of Lecture Notes in Mathematics,
  Springer-Verlag, Berlin, 2000.
\newblock With applications to homoclinic orbits in reversible systems.

\bibitem{lustri-porter}
{\sc C.~J. Lustri and M.~A. Porter}, {\em Nanoptera in a period-2 {T}oda
  chain}, SIAM J. Appl. Dyn. Syst., 17 (2018), pp.~1182--1212.

\bibitem{nachbin-papanicolaou}
{\sc A.~Nachbin and G.~C. Papanicolaou}, {\em Water waves in shallow channels
  of rapidly varying depth}, J. Fluid Mech., 241 (1992), pp.~311--332.

\bibitem{nakoulima-etal}
{\sc O.~Nakoulima, N.~Zahibo, E.~Pelinovsky, T.~Talipova, and A.~Kurkin}, {\em
  Solitary wave dynamics in shallow water over periodic topography}, Chaos, 15
  (2005), pp.~037107, 8.

\bibitem{okada}
{\sc Y.~Okada, S.~Watanabe, and H.~Tanaca}, {\em Solitary wave in periodic
  nonlinear lattice}, Journal of the Physical Society of Japan, 59 (1990),
  pp.~2647--2658.

\bibitem{pego-weinstein}
{\sc R.~L. Pego and M.~I. Weinstein}, {\em Asymptotic stability of solitary
  waves}, Comm. Math. Phys., 164 (1994), pp.~305--349.

\bibitem{pelinovsky-schneider}
{\sc D.~E. Pelinovsky and G.~Schneider}, {\em The monoatomic {FPU} system as a
  limit of a diatomic {FPU} system}, Appl. Math. Lett., 107 (2020), pp.~106387,
  8.

\bibitem{schneider-wayne}
{\sc G.~Schneider and C.~E. Wayne}, {\em The rigorous approximation of
  long-wavelength capillary-gravity waves}, Arch. Ration. Mech. Anal., 162
  (2002), pp.~247--285.

\bibitem{sun}
{\sc S.~M. Sun}, {\em Existence of a generalized solitary wave solution for
  water with positive {B}ond number less than {$1/3$}}, J. Math. Anal. Appl.,
  156 (1991), pp.~471--504.

\bibitem{tabata}
{\sc Y.~Tabata}, {\em Stable solitary wave in diatomic toda lattice}, Journal
  of the Physical Society of Japan, 65 (1996), pp.~3689--3691.

\bibitem{tan-etal}
{\sc Y.~Tan, J.~Yang, and D.~E. Pelinovsky}, {\em Semi-stability of embedded
  solitons in the general fifth-order {K}d{V} equation}, Wave Motion, 36
  (2002), pp.~241--255.

\end{thebibliography}

\end{document}